\newtheorem{thm}{Theorem}[section]
\newtheorem{lem}[thm]{Lemma}
\newtheorem{cor}[thm]{Corollary}
\theoremstyle{remark}
\newtheorem{rem}[thm]{Remark}
\def\les{\lesssim}
\renewcommand{\leq}{\leqslant}
\renewcommand{\geq}{\geqslant}
\renewcommand{\subset}{\subseteq}
\newcommand{\E}{\mathbb{E}}
\newcommand{\EE}{\mathbf{E}^{0}}
\newcommand{\EEo}{\mathbf{E}^{\omega,\eps}}
\renewcommand{\P}{\mathbb{P}}
\newcommand{\PP}{\mathbf{P}^{0}}
\newcommand{\PPo}{\mathbf{P}^{\omega,\eps}}
\newcommand{\Ione}{\mathcal{I}}
\newcommand{\Itwo}{\mathcal{I}'}
\newcommand{\U}{\mathcal{U}}
\newcommand{\ZZ}{\mathcal{Z}}
\newcommand{\V}{\mathcal{V}}
\newcommand{\1}{\mathbf{1}}
\newcommand{\R}{\mathbb{R}}
\newcommand{\C}{\mathcal{C}}
\newcommand{\I}{\mathcal{I}}
\newcommand{\T}{\mathbb{T}}
\newcommand{\Z}{\mathbb{Z}}
\newcommand{\cP}{\mathscr{P}}
\newcommand{\ccP}{\mathcal{P}}
\newcommand{\eps}{\varepsilon}
\newcommand{\cA}{\mathcal{A}}
\numberwithin{equation}{section}
\begin{document}
\author[I. Corwin]{Ivan Corwin}
\address{I. Corwin, Columbia University,
Department of Mathematics,
2990 Broadway,
New York, NY 10027, USA,
and Clay Mathematics Institute, 10 Memorial Blvd. Suite 902, Providence, RI 02903, USA}
\email{ivan.corwin@gmail.com}

\title[KPZ equation and large deviation for RWRE]{Kardar-Parisi-Zhang equation and large deviations for random walks in weak random environments}
\author[Y. Gu]{Yu Gu}
\address{Y. Gu, Department of Mathematics, Building 380, Stanford University, Stanford, CA, 94305, USA}
\email{yg@math.stanford.edu}

\begin{abstract}
We consider the transition probabilities for random walks in $1+1$ dimensional space-time random environments (RWRE). For critically tuned weak disorder we prove a sharp large deviation result: after appropriate rescaling, the transition probabilities for the RWRE evaluated in the large deviation regime, converge to the solution to the stochastic heat equation (SHE) with multiplicative noise (the logarithm of which is the KPZ equation). We apply this to the exactly solvable Beta RWRE and additionally present a formal derivation of the convergence of  certain moment formulas for that model to those for the SHE.
\end{abstract}

\maketitle
%
%
%
%
%
%
%
%
%

\section{Introduction}
We consider random walks in $1+1$ dimensional space-time random environments (RWRE). The environment is specified by a sequence of zero-mean i.i.d. random variables (whose probability measure is denoted $\P$ and expectation operator $\E$):
\[
\omega=\{\omega_{i,j}:i\in \Z_{\geq 0},j\in \Z\},
\]
and a parameter $\eps>0$. For each realization of $\omega$, and each $\eps$ we consider a measure $\PPo$ on one-dimensional nearest neighbor walks  $\{S_n\}_{n\geq 0}$ started at the origin ($S_0=0$) which jump up or down according to the probabilities
\begin{equation}
\PPo(S_{n+1}=S_n+1)=\frac{1+\eps^\frac12\omega_{n,S_n}}{2}, \ \ \PPo(S_{n+1}=S_n-1)=\frac{1-\eps^\frac12\omega_{n,S_n}}{2}.
\label{eq:rwre}
\end{equation}
We assume that the $\omega$ and $\eps$ are such that all probabilities lie in $[0,1]$. When $\eps=0$, this becomes the measure of a simple symmetric random walk (SSRW), and we distinguish this measure by writing it as $\PP$ (we  use $\EEo$ and $\EE$ to denote the respective expectation operators). The parameter $\eps$ allows us to tune the strength of the disorder around that of the SSRW. In this paper we will consider the transition probability $\PPo(S_N=y)$ as a random variable (it inherits the randomness of $\omega$). We are interested in the large deviation regime whereby $y=vN$ for some $v\in (0,1)$.

For $\omega$ and $\eps$ fixed, subject to some hypotheses on $\omega$, \cite{RASY13} proves that $N^{-1}\log \PPo(S_N=vN)$ has a limit (which is called the rate function). That rate function is written in terms of the Legendre transform of another $N\to\infty$ limit. In the special case when the random variables $\tfrac{1}{2}(1+\eps^\frac12\omega_{n,S_n})$ are distributed according to the $\mathrm{Beta}(\alpha,\beta)$ distribution, \cite{barraquand2015random} proves an explicit and simple formula for the rate function. Further, \cite{barraquand2015random} proves that after centering $\log \PPo(S_N\geq vN)$ by $N$ times the rate function, the result fluctuates like $N^{1/3}$ times a GUE Tracy-Widom random variable. (This result was only proved for certain ranges of parameters $\alpha,\beta,v$ though should hold in general away from law of large numbers velocity.) Note that the result of \cite{barraquand2015random} involved the tail probabilities instead of the transition probabilities. A similar result should hold in both cases -- see the recent non-rigorous physics work of \cite{LeDoussal-Thierry} regarding the transition probability fluctuations. The occurrence of cube-root fluctuations and the GUE Tracy-Widom distribution demonstrates a relation between the Beta RWRE and the KPZ universality class \cite{ICReview}.

To further elucidate the connection between the RWRE and KPZ universality class, and to motivate our main results, let us observe how the RWRE transition probability is equal to the partition function for a directed polymer model. (We utilize the notational conventions from \cite{barraquand2015random} to make comparison with those results easier.) For $N\in\Z_{\geq 0}$ and $x\in\Z_{>0}$, we consider paths starting from $(0,1)$ and ending at $(N,x)$, which are allowed to make right steps by adding $e_1$ and diagonal steps by adding $e_1+e_2$ (here $e_1=(1,0)$ and $e_2=(0,1)$ are standard basis vectors). The weight of a path is the product of the weights of each edge along the path, and the weight of the edge $e$ is defined by
\begin{equation}
\left\{ \begin{array}{ll}
B_{i,j} & \mbox{ if $e$ is the horizontal edge } (i-1,j)\to (i,j),\\
1-B_{i,j} & \mbox{ if $e$ is the diagonal edge } (i-1,j-1)\to (i,j),
\end{array}
\right.
\label{eq:weight}
\end{equation}
where $\{B_{i,j}\}$ is a sequence of i.i.d. random variables supported on $[0,1]$. The point-to-point polymer partition function $Z(N,x)$ then equals the sum over all paths from $(0,1)$ to $(N,x)$ of the above defined path weights, and it satisfies the recursion relation
\begin{equation}
Z(N,x)=Z(N-1,x)B_{N,x}+Z(N-1,x-1)(1-B_{N,x})
\label{eq:repa}
\end{equation}
with initial data $Z(0,x)=\1\{x=1\}$. This polymer has the special property that weights leading into a particular vertex always sum to $1$. Let us note that the above recursion for $Z(N,x)$ is a special case of a random averaging process (RAP) which satisfies
$$
Z(N,x)=\sum_{k: |k|\leq M} Z(N-1,x+k)B_{N,x}(k),
$$
where $B_{i,j}=\{B_{i,j}(k): -M\leq k\leq M,\, \sum_{k:|k|\leq M} B_{i,j}(k)=1\}$ are i.i.d. probability vectors. One may try to generalize some of the results we prove herein to the RAP, though we do not pursue that here.

The RWRE transition probability is related to this partition function via a time reversal. If we let
\begin{equation}
\PPo(S_{n+1}=S_n+1)=B_{n,S_n}, \ \ \PPo(S_{n+1}=S_n-1)=1-B_{n,S_n},
\label{eq:timere}
\end{equation}
(i.e., $\omega_{i,j}=2\eps^{-\frac12}(B_{i,j}-\tfrac12)$) then for fixed $N$ and $x$ we
have the equality in law
\begin{equation}
Z(N,x)=\PPo(S_{N}=N-2x+2).
\label{eq:rwpoly}
\end{equation}
A similar result holds for the RAP whereby the resulting RWRE may have jump sizes randomly sampled from $\{-M,\ldots,M\}$ according to the environment; see \cite{balazs2006random}.

Directed polymer partition functions (in fact their logarithms) are expected to show KPZ-class fluctuations under general choices of weights. This conjecture is far from proved, having only been demonstrated for certain exactly solvable models -- see the review \cite{corwin2014macdonald} for various recent references. The aforementioned result for the Beta RWRE (or equivalently polymer) thus fits into this conjecture.

For directed polymer models with i.i.d. weights on vertices (instead of edges) of the form $e^{\beta \omega}$, \cite{alberts2014intermediate} introduced {\it intermediate disorder} or {\it weak noise} scaling whereby as $N$ goes to infinity, $x$ scales like $N^{1/2}$ and $\beta$ like $N^{-1/4}$. Under that scaling, the partition function converges to a continuum partition function whereby a Brownian bridge moves through a space-time white noise potential and assigns a weight to each path given by the exponential of the integral of the white noise along the path. That partition function is equal, via the Feynman-Kac representation, to the solution to the stochastic heat equation (SHE) with multiplicative white noise
$$
\partial_t\ZZ(t,x) = \frac{1}{2} \partial_{xx} \ZZ(t,x) + \ZZ(t,x) \dot{W}(t,x)
$$
with delta initial data $\ZZ(0,x) =\delta_{x}$. See, for example, \cite{alberts2014intermediate} and references therein regarding the definition of this equation and white noise $\dot{W}$. For reference, note that the logarithm of the stochastic heat equation solves the KPZ equation, and the weak noise scaling is such that the KPZ equation remains invariant under it.

The results of \cite{alberts2014intermediate} was proved via the convergence of the discrete chaos series for the polymer partition function to that of the continuum Wiener chaos series for $\ZZ(t,x)$. Since we will make use of this chaos series, let us recall it here for delta initial data:
\begin{equation}
\ZZ(t,x) = p(t,x) + \sum_{k=1}^{\infty} \int_{\Delta_k(t)\times \R^k} \prod_{\ell=1}^{k+1} p(t_{\ell}-t_{\ell-1}, x_{\ell}-x_{\ell-1}) \prod_{\ell=1}^{k} W(dt_{\ell},dx_{\ell}),
\label{eq:chaosSHE}
\end{equation}
with
\begin{equation}
\Delta_k(t)=\big\{(t_1,\ldots,t_k):0< t_1< \ldots<  t_k< t\big\},
\label{eq:Deltak}
\end{equation}
$t_0=0$, $t_{k+1}=t$, $x_{0}=0$, $x_{k+1}=x$ and the standard heat kernel $p(t,x) = (2\pi t)^{-1/2} \exp\{-x^2/2t\}$. See \cite{alberts2014intermediate} for the definition of the multiple stochastic integrals against space-time white noise.

\subsection{Main result}
Inspired by the intermediate disorder regime of the polymer model explored in \cite{alberts2014intermediate}, we seek here to analyze $\PPo(S_N=y)$ when $N\sim t/\eps^2$ and $y\sim vt/\eps^2+x/\eps$ for some fixed speed $v\in (0,1)$ and $t>0,x\in\R$. Notice that this scaling corresponds with that used in the polymer setup since the noise is tuned by a factor of $\eps^{1/2}$ around its deterministic value $1/2$. In light of this similarity, we expect that after appropriate scaling, $\PPo(S_N=y)$ should converge to the solution (perhaps up to simple scaling of coordinates) of the SHE with delta initial data. That is exactly what we prove here.
%

\begin{figure}
\begin{center}
\includegraphics[scale=.8]{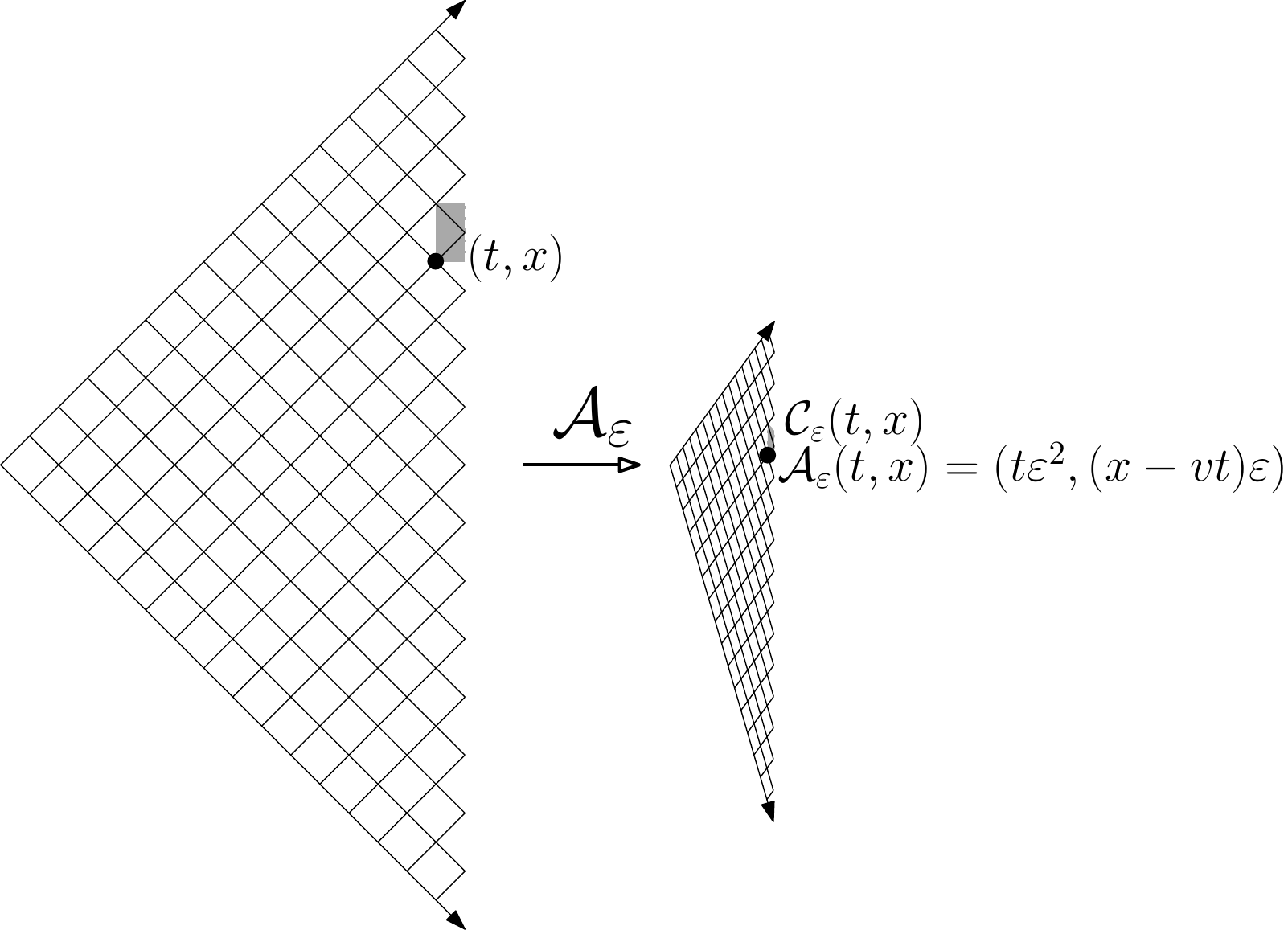}
\end{center}
\caption{On the left is the subspace of $\Z_2^2$ on which the RWRE trajectory lives. The image under the map $\cA_{\eps}$ is shown on the right. The grey box is the cell $\C_\eps(t,x)$ indexed by $(t,x)$.}
\label{fig:grid}
\end{figure}

Before stating our theorem, we introduce notation which identifies points in $\R_{+}\times\R$ with nearby points in the $\eps$-scaled lattice on which the random walk lives. This notation will also be used in the proofs.

The RWRE paths lie along the nonnegative time, even sublattice of $\Z^2$ which we denote by
$$
\Z_{2}^2=\{(t,x): t\in \Z_{\geq 0}, t+x\mbox{ is even}\}.
$$
For $v$ fixed and $\eps>0$ define the affine transformation $\cA_{\eps}:\R^2\to \R^2$ by
$$
\cA_{\eps}(t,x) = \big(t\eps^2,(x-vt)\eps\big),
$$
and denote the image of $\Z_2^2$ under $\cA_{\eps}$ by $\T_{\eps}$. For $(t,x)\in \Z_2^2$ let $\C_\eps(t,x)$ be the image under $\cA_{\eps}$ of the rectangle with vertices $(t,x), (t+1,x), (t+1,x+2), (t,x+2)$, which is a parallelgram. For convention we assume that the bottom and left edges of the rectangle are included whereas the top and right edges are not. As $(t,x)\in\Z_2^2$ varies, the $\C_\eps(t,x)$ form a disjoint partition of $\R_{+}\times \R$ into cells indexed by the bottom left point $(t,x)$ -- we refer to $\C_{\eps}$ as a tessellation and the $\C_{\eps}(t,x)$ as its cells. See Figure \ref{fig:grid} for an illustration of these definitions. For any function which is defined on $\Z_2^2$ (e.g. the transition probability) we may extend its domain to $\R_{+}\times \R$ by the convention that the value on each cell is constant and equal to the value at its bottom left corner. Also, for any $(t,x)$, we define $(t_{\eps},x_{\eps})$ to be the unique point in $\T_{\eps}$ which lies in the same cell as $(t,x)$. By these conventions, we have that the probability $\PPo$ in the left-hand side of \eqref{eq:LDPprob} remains unchanged as $(t,x)$ varying within the cell associated with $(t_{\eps},x_{\eps})$.

We are now prepared to state our main theorem.
\begin{thm}
\label{t:mainTh}
Fix any distribution $\P$ on $\omega$ so that for $\eps$ small, the probabilities in \eqref{eq:rwre} are in $[0,1]$ and let $\sigma =\sqrt{2\E\{\omega_{i,j}^2\}}$. Then, for fixed $t>0, x\in\R$ and $v\in(0,1)$, we have
\begin{equation}
\frac{1}{\eps}e^{\frac{t_{\eps}}{\eps^2}\Ione(v)+\frac{x_{\eps}}{\eps}\Itwo(v)}\PPo\left(S_{\frac{t_{\eps}}{\eps^2}}= v\frac{t_{\eps}}{\eps^2}+\frac{x_{\eps}}{\eps}\right)\Rightarrow \U(t,x)
\label{eq:LDPprob}
\end{equation}
in distribution (with respect to the measure $\P$ on the $\omega$) as $\eps\to0$, where $\U(t,x)$ solves the following SHE (with scaled coefficients)
\begin{equation}
\partial_t \U(t,x)=\frac12(1-v^2)\,\partial_{xx} \U(t,x)+v\sigma\, \U(t,x)\,\dot{W}(t,x), \ \ \U(0,x)=2\delta(x),
\label{eq:spde}
\end{equation}
and
\begin{equation}
\Ione(v)=\frac{1-v}{2}\log (1-v)+\frac{1+v}{2}\log(1+v).
\end{equation}
\end{thm}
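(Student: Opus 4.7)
My starting point would be the identity
\begin{equation*}
\PPo(S_N = y) = \EE\Bigl[\1(S_N = y)\prod_{n=0}^{N-1}\bigl(1+\eps^{1/2}\omega_{n,S_n}\xi_n\bigr)\Bigr],
\end{equation*}
with $\xi_n := S_{n+1} - S_n \in \{\pm 1\}$. I would first Girsanov-tilt the SSRW reference $\PP$ to a random walk $\mathbf{P}_v$ with step up-probability $(1+v)/2$. On the event $\{S_N = y\}$ the Radon--Nikodym density $d\PP/d\mathbf{P}_v = \prod_n (1+v\xi_n)^{-1}$ is deterministic and equal to $(1+v)^{-(N+y)/2}(1-v)^{-(N-y)/2}$, whose logarithm is exactly $-N\Ione(v)-\tfrac12(y-vN)\log\tfrac{1+v}{1-v}$. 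This forces $\Itwo(v) = \tfrac12\log\tfrac{1+v}{1-v} = \Ione'(v)$ and exactly absorbs the exponential tilt of \eqref{eq:LDPprob}, so the claim reduces to
\begin{equation*}
\tfrac{1}{\eps}\,\mathbf{E}_v\Bigl[\1(S_N=y)\prod_{n<N}(1+\eps^{1/2}\omega_{n,S_n}\xi_n)\Bigr] \;\Rightarrow\; \U(t,x),\qquad N = \tfrac{t_\eps}{\eps^2},\ y = vN+\tfrac{x_\eps}{\eps}.
\end{equation*}

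For the noise-free piece ($\omega\equiv 0$), the local CLT for $\mathbf{P}_v$, whose step has mean $v$ and variance $1-v^2$, immediately yields $\tfrac{1}{\eps}\mathbf{P}_v(S_N = y) \to 2\,p_{1-v^2}(t,x)$, matching both the initial datum $\U(0,x)=2\delta(x)$ and the diffusion coefficient $\tfrac12(1-v^2)$ in \eqref{eq:spde}. To capture the $\omega$-dependence I would expand
\begin{equation*}
\prod_n(1+\eps^{1/2}\omega_{n,S_n}\xi_n) = \sum_{k\ge 0}\eps^{k/2}\!\!\sum_{n_1<\cdots<n_k}\prod_{i=1}^{k}\omega_{n_i,S_{n_i}}\xi_{n_i},
\end{equation*}
and use the Markov property of $\mathbf{P}_v$ to factor the path expectation into single-step quantities of the form $\mathbf{E}_v[\xi_0\,\1(S_m=z)] = v\,\mathbf{P}_v(S_m=z)+O(\eps)$, the $O(\eps)$ piece being a discrete spatial derivative of the transition kernel. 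This extracts a factor of $v$ at every noise insertion, turning the $k$-th summand into $v^k\prod_{i=0}^k \mathbf{P}_v(S_{n_{i+1}-n_i}=y_{i+1}-y_i)\cdot \prod_i\omega_{n_i,y_i}$, plus a lower-order remainder.

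The heart of the argument would then be a discrete-to-continuous Wiener chaos limit in the spirit of \cite{alberts2014intermediate}. A Riemann-sum calculation based on the tilted local CLT shows that the $k$-th discrete chaos converges to the $k$-th term of the series \eqref{eq:chaosSHE} for $\U$, with heat kernel of variance $1-v^2$ and noise strength $v\sigma$: the $v^k$ is inherited from $\mathbf{E}_v[\xi_n]=v$ and the $\sigma^k$ assembles from $\E[\omega^2]=\sigma^2/2$ combined with the prefactor $\eps^{k/2}$ and the cell area $2\eps^3$ of the map $\cA_\eps$. The main obstacle I anticipate is uniform tail control: one must bound the $L^2$ norm of the $k$-th chaos term by a summable quantity independent of $\eps$ in order to lift termwise convergence to convergence in distribution of the full series. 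The factors $\xi_{n_i}$ complicate these second-moment estimates because, for two independent copies $S^{(1)},S^{(2)}$ of the tilted walk meeting at a marked vertex, $\mathbf{E}_v[\xi^{(1)}_n\xi^{(2)}_n]=v^2\neq 1$; however, using the independence of $\omega$ from the paths, the $L^2$ computation reduces to an intersection-local-time expression for two independent tilted walks and satisfies the standard polymer-type bound. Assembling these pieces yields $\U_\eps(t,x)\Rightarrow\U(t,x)$ solving \eqref{eq:spde}.
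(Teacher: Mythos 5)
Your proposal is correct in its broad strokes but takes a genuinely different route from the paper. The paper works directly with the SSRW measure $\PP$: it expands the RWRE transition probability in a polynomial chaos whose coefficients $\psi_k$ are products of differences of SSRW transition probabilities (Lemma~\ref{l:pro}), and then proves a \emph{sharp large-deviation} local limit theorem for the SSRW (Appendix~\ref{app:A}, Lemma~\ref{l:ldprw}) to pass to the continuum; the convergence machinery of Caravenna--Sun--Zygouras is then invoked. Your Girsanov tilt to $\mathbf{P}_v$ is an attractive alternative: because the number of up/down steps is determined by the endpoint, the Radon--Nikodym density is deterministic on $\{S_N=y\}$ and exactly cancels the normalization $e^{\frac{t_\eps}{\eps^2}\Ione(v)+\frac{x_\eps}{\eps}\Itwo(v)}$, identifying $\Itwo = \Ione'$ essentially for free and reducing the SSRW large-deviation regime to the \emph{local CLT} regime for the biased walk. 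The factor $v$ at each noise insertion then comes cleanly from $\mathbf{E}_v[\xi]=v$ (rather than, as in the paper, from the limit $\ccP_\eps(t_l,x_l,t_{l-1},x_{l-1})\to 4v\,p_{1-v^2}$ of a difference of exponentially tilted probabilities); the two are of course equivalent, since the Stirling asymptotics behind Lemma~\ref{l:ldprw} are exactly the local CLT for the binomial. Conceptually your organization is cleaner, though the paper's version keeps the chaos coefficients manifestly deterministic and in a form directly amenable to the $L^2$ estimates.

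Where you have a genuine gap is precisely where the paper spends most of its effort: the uniform-in-$k$ control of the $L^2$ norm of the chaos coefficients, needed both for termwise $L^2(\Delta_k(t)\times\R^k)$ convergence and for the summable tail \eqref{eq:ubdl2}. You describe this as ``the standard polymer-type bound,'' but the discrete coefficient $\psi_{\eps,k}$ behaves like $1/\eps$ (not like $(t_l-t_{l-1})^{-1/2}$) on the near-diagonal set $\Delta_{k,\eps}(t)$ where consecutive time gaps are $O(\eps^2)$, and one must show that this singular region contributes negligibly and that the remaining contribution is summable via a Dirichlet-type integral (see the paper's case analysis on $\Delta_{k,\eps}(t)$ and the estimate \eqref{eq:bd1}). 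Moreover, in your framework each single-step factor is $v\,\mathbf{P}_v(\cdot)+O(\eps)$, and the $O(\eps)$ discrete-derivative remainders generate additional cross terms in the $L^2$ computation that you do not show are controllable uniformly in $k$. Without carrying out this estimate, the passage from pointwise convergence of each chaos term to convergence in distribution of the full series is not justified.
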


Theorem~\ref{t:mainTh} implies that
\[
\log \PPo\left(S_{\frac{t_{\eps}}{\eps^2}}= v\frac{t_{\eps}}{\eps^2}+\frac{x_{\eps}}{\eps}\right)+\frac{t_{\eps}}{\eps^2} \Ione(v)+\frac{x_{\eps}}{\eps}\Itwo(v)-\log\eps \Rightarrow \log \U(t,x)
\]
in distribution, i.e., the recentered logarithm of the transition probability converges in distribution to the Hopf-Cole solution to the KPZ equation with narrow wedge initial data \cite{ACQ}.

\begin{rem}
For a SSRW starting from the origin (i.e., the $\eps=0$ case of the RWRE), a slight refinement of Cramer's theorem (see, for example, Lemma~\ref{l:ldprw}) shows that
\[
\frac{1}{\eps}e^{\frac{t}{\eps^2}\Ione(v)+\frac{x}{\eps}\Itwo(v)}\PP\left(S_{\frac{t}{\eps^2}}= v\frac{t}{\eps^2}+\frac{x}{\eps}\right)\to U(t,x)
\]
as $\eps\to 0$, where 
$U$ solves the heat equation (with scaled coefficient)
\begin{equation}
\partial_t U(t,x)=\frac12(1-v^2)\partial_{xx} U(t,x), \ \ U(0,x)=2\delta(x).
\label{eq:pde}
\end{equation}
Comparing \eqref{eq:spde} with \eqref{eq:pde}, we observe that the effect of the weak random environment on the transition probability is manifested as a multiplicative space-time white noise in the limiting equation.
\end{rem}

In light of the connection \eqref{eq:rwpoly} between the RWRE and directed polymers, the proof of Theorem~\ref{t:mainTh} also implies a similar convergence result for the polymer partition function $Z(N,x)$. The Beta polymer \cite{barraquand2015random} corresponds with taking $B_{i,j}$ i.i.d. as $\mathrm{Beta}(\alpha,\beta)$ random variables with $\alpha,\beta>0$. By the relation $\omega_{i,j}=2\eps^{-\frac12}(B_{i,j}-\tfrac12)$, fixing the distribution of the $B_{i,j}$ corresponds with taking $\omega_{i,j}$ dependent upon $\eps$. If we tune the parameters of the $\mathrm{Beta}(\alpha,\beta)$ random variables so that $\alpha=\beta=\eps^{-1}$, then the resulting $\omega_{i,j}$ has a non-trivial $\eps\to 0$ limit and the same method of proof as for Theorem~\ref{t:mainTh} applies and yields the following.

\begin{thm}
For the Beta polymer, fix $t>0,x\in\R$ and $\gamma\in(0,\frac12)$. Let $\alpha=\beta=\eps^{-1}$, we have
\[
\frac{1}{\eps}e^{\frac{t_{\eps}}{\eps^2}\Ione(1-2\gamma)-\frac{2x_{\eps}}{\eps}\Itwo(1-2\gamma)}Z\left(\frac{t_{\eps}}{\eps^2},\gamma\frac{t_{\eps}}{\eps^2}+\frac{x_{\eps}}{\eps}\right)\Rightarrow \V(t,x)
\]
in distribution as $\eps\to0$, where $\V(t,x)$ solves
\begin{equation}
\partial_t \V(t,x)=\frac12\gamma(1-\gamma)\,\partial_{xx}\V(t,x)+\frac{1-2\gamma}{\sqrt{2}}\,\V(t,x)\,\dot{W}(t,x), \ \ \V(0,x)=\frac{\gamma}{1-\gamma}\delta(x).
\label{eq:spde1}
\end{equation}
\label{c:poly}
\end{thm}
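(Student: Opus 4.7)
The plan is to reduce to Theorem~\ref{t:mainTh} via the polymer–RWRE identity~\eqref{eq:rwpoly} combined with a linear change of spatial variable, after observing that the proof of Theorem~\ref{t:mainTh} tolerates a mildly $\eps$-dependent environment law. With $\alpha=\beta=\eps^{-1}$, the weights $B_{i,j}\sim\mathrm{Beta}(\eps^{-1},\eps^{-1})$ give an environment $\omega_{i,j}=2\eps^{-1/2}(B_{i,j}-\tfrac12)$ whose law depends on $\eps$, but for which $\E[\omega_{i,j}^2]=\eps^{-1}/(2\eps^{-1}+1)\to\tfrac12$; the effective ``$\sigma$'' in the limit is thus $\sigma=1$. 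Since Beta$(\eps^{-1},\eps^{-1})$ concentrates at $\tfrac12$ on scale $\eps^{1/2}$, higher moments of $\omega_{i,j}$ are bounded uniformly in $\eps$. Setting $v:=1-2\gamma\in(0,1)$ and $\widetilde x_\eps:=-2x_\eps+2\eps$, \eqref{eq:rwpoly} rewrites
\[
Z\!\Bigl(\tfrac{t_\eps}{\eps^2},\,\gamma\tfrac{t_\eps}{\eps^2}+\tfrac{x_\eps}{\eps}\Bigr)=\PPo\!\Bigl(S_{t_\eps/\eps^2}=v\tfrac{t_\eps}{\eps^2}+\tfrac{\widetilde x_\eps}{\eps}\Bigr).
\]

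One then re-runs the argument for Theorem~\ref{t:mainTh} with this slowly varying Beta environment. The chaos-expansion/Feynman--Kac proof of Theorem~\ref{t:mainTh} depends on $\omega$ only through its zero mean, its variance, and uniform higher-moment bounds to discard error terms, so it extends verbatim. This yields
\[
\frac{1}{\eps}\,e^{\frac{t_\eps}{\eps^2}\Ione(v)+\frac{\widetilde x_\eps}{\eps}\Itwo(v)}\,\PPo\!\Bigl(S_{t_\eps/\eps^2}=v\tfrac{t_\eps}{\eps^2}+\tfrac{\widetilde x_\eps}{\eps}\Bigr)\ \Rightarrow\ \widetilde{\U}(t,-2x),
\]
where $\widetilde{\U}(t,y)$ solves $\partial_t\widetilde{\U}=\tfrac12(1-v^2)\partial_{yy}\widetilde{\U}+v\,\widetilde{\U}\,\dot W$ with $\widetilde{\U}(0,y)=2\delta(y)$.

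The final step converts to polymer variables. The additive constant in $\widetilde x_\eps=-2x_\eps+2\eps$ yields a multiplicative factor $e^{2\Itwo(v)}$; since $\Itwo(v)=\tfrac12\log\tfrac{1+v}{1-v}$, with $v=1-2\gamma$ this equals $(1-\gamma)/\gamma$. Defining $\V(t,x):=\tfrac{\gamma}{1-\gamma}\widetilde{\U}(t,-2x)$ then gives precisely the convergence stated in the theorem. A direct substitution confirms that $\V$ satisfies~\eqref{eq:spde1}: the change $y=-2x$ rescales $\partial_{yy}$ by $\tfrac14$, so the diffusivity becomes $\tfrac12(1-v^2)\cdot\tfrac14=\tfrac12\gamma(1-\gamma)$; the white-noise scaling $\dot W(t,-2x)\stackrel{d}{=}\tfrac{1}{\sqrt2}\dot W'(t,x)$ gives noise intensity $v/\sqrt2=(1-2\gamma)/\sqrt2$; and $\delta(-2x)=\tfrac12\delta(x)$ produces initial datum $\tfrac{\gamma}{1-\gamma}\delta(x)$. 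The only substantive work beyond Theorem~\ref{t:mainTh} is verifying the uniform-in-$\eps$ moment control for $\omega$, which is automatic in this highly concentrated regime.
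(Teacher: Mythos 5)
Your proposal is correct and follows essentially the same route as the paper: observe that Theorem~\ref{t:mainTh} holds when the law of $\omega$ is allowed to depend on $\eps$ (subject to zero mean, convergent second moment, and a Lindeberg/uniform-integrability condition in the Caravenna--Sun--Zygouras framework), compute $\E_\eps\{\omega_{i,j}^2\}\to\tfrac12$ so $\sigma\to 1$, invoke $Z(N,y)=\PPo(S_N=N-2y+2)$ from~\eqref{eq:rwpoly}, and transfer the result. The one place you go further than the paper is in explicitly tracking the affine change of variable ($y=-2x$ plus the lattice shift of $+2$ giving the factor $e^{-2\Itwo(v)}=\gamma/(1-\gamma)$) and verifying that the scaled diffusivity $\tfrac12\gamma(1-\gamma)$, noise intensity $(1-2\gamma)/\sqrt2$, and initial mass $\gamma/(1-\gamma)$ emerge correctly; the paper leaves these computations implicit. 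That added verification is correct and a useful sanity check, even if the paper's terser presentation suffices.
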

\begin{rem}
By \eqref{eq:rwpoly}, we have
\[
Z\left(\frac{t_{\eps}}{\eps^2},\gamma\frac{t_{\eps}}{\eps^2}+\frac{x_{\eps}}{\eps}\right)=\PPo\left(S_{\frac{t_{\eps}}{\eps^2}}=(1-2\gamma)\frac{t_{\eps}}{\eps^2}-\frac{2x_{\eps}}{\eps}+2\right),
\]
thus the tessellation, and consequently the meaning of $(t_{\eps},x_{\eps})$, in Theorem~\ref{c:poly} should be defined with $v=1-2\gamma$.
\end{rem}

As an independent confirmation of this result, in Section \ref{s:beta} we demonstrate how known formulas for moments of the Beta polymer partition function converge in the above scaling to those of the limiting SHE. Note that our weak convergence result does not imply convergence of moments (though it is reasonable to expect that such a stronger form of convergence may hold).

\vskip 3mm
Let us briefly sketch our approach to prove Theorem~\ref{t:mainTh}. The transition probability $\PPo(S_N=y)$ is the sum of the probabilities of all directed paths connecting $(0,0)$ to $(N,y)$ in $\Z_2^2$. These probabilities are products of the terms on the right-hand side of \eqref{eq:rwre} and can be expanded into powers of $\eps$. Indexing these sums in terms of the degree of the power of $\eps$ yields what is called a ``polynomial chaos" in the $\omega$ noise; see Lemma~\ref{l:pro}. The solution to the SHE also admits an expansion in term of Wiener chaos \eqref{eq:chaosSHE} and the proof then reduces to showing convergence of the polynomial to Wiener chaos series. For this, we apply the framework developed in \cite{caravenna2013polynomial} (which generalizes the results for polymers studied in \cite{alberts2014intermediate}) whereby a general criteria is given for such a convergence to hold. The key criteria to confirm is the $L^2$ convergence of the (deterministic) coefficients of each polynomial chaos to the corresponding coefficients of the Wiener chaos; see Lemma~\ref{l:conco}. Since the path of the RWRE coincides with that of the SSRW, the coefficients appearing in our setting are just joint transition probabilities of the SSRW and can be computed and bounded explicitly; see the sharp large deviation result for the SSRW presented in Appendix \ref{app:A}.


\subsection{Acknowledgements}
I.C. was partially supported by the NSF through DMS-1208998, the Clay Mathematics Institute through a Clay Research Fellowship, the Institute Henri Poincar\'{e} through the Poincar\'{e} Chair, and the Packard Foundation through a Packard Fellowship for Science and Engineering. Y.G. was partially supported by the NSF through DMS-1613301. We would like to thank the anonymous referees for several helpful suggestions.

\section{Proof of Theorems~\ref{t:mainTh} and \ref{c:poly}: convergence of the polynomial chaos}
\label{s:pfth}
The proof of Theorem~\ref{t:mainTh} (and similarly of Theorem~\ref{c:poly}) follows from the following two lemmas. After stating these lemmas, we complete the proof of the theorems and then devote the rest of this section to the proof of the lemmas. As in the introduction, we use $S_n$ to denote the trajectory of a nearest neighbor walk on $\Z$, $\PPo$ to denote the RWRE measure on such a walk given $\omega$ and $\eps$, and $\PP$ to denote the SSRW measure. Below, we also abbreviate points $(i,j)\in \Z_2^2$ by $z=(i,j)$, or $z_{l}=(i_l,j_l)$ when referring to multiple such points.

The following lemma expresses the random transition probability $\PPo(S_N=y)$ in terms of a polynomial chaos series with respect to the i.i.d. random variables $\omega$. It is easily proved by expanding the product formula for the transition probability in terms of powers of $\eps$.
\begin{lem}
\label{l:pro}
For any $(N,y)\in \Z_2^2$, we have
\begin{equation}
\PPo(S_N=y)=\PP(S_N=y)+\sum_{k=1}^N\eps^{\frac{k}{2}}\sum_{(z_1,\ldots,z_k)} \psi_k(z_1,\ldots,z_k)\omega_{z_1}\ldots \omega_{z_k},
\label{eq:Pold}
\end{equation}
where the summation $\sum_{(z_1,\ldots,z_k)}$ is over all possible $0\leq i_1<\ldots<i_k\leq N-1$ and $j_1,\ldots,j_k\in\Z$, and the expansion coefficients are given by
\begin{equation}
\begin{aligned}
\psi_k(z_1,\ldots,z_k)=\frac{1}{2^k}& \PP(S_{i_1}=j_1)\\
\times &\big[\PP(S_{i_2-i_1-1}=j_2-j_1-1)-\PP(S_{i_2-i_1-1}=j_2-j_1+1)\big]\\
\times  &\big[\PP(S_{i_3-i_2-1}=j_3-j_2-1)-\PP(S_{i_3-i_2-1}=j_3-j_2+1)\big]\\
\times &\ldots\\
\times &\big[\PP(S_{N-i_k-1}= y-j_k-1)-\PP(S_{N-i_k-1}= y-j_k+1)\big].
\end{aligned}
\label{eq:defpsi}
\end{equation}
\end{lem}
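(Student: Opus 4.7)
The plan is to obtain \eqref{eq:Pold} by a direct expansion of the product that defines $\PPo(S_N=y)$ in powers of $\eps^{1/2}$, then to exchange the order of summation so that the $\omega$-dependence is cleanly separated from path counting.

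First, I would write
\[
\PPo(S_N=y)=\sum_{\text{paths } 0\to y}\prod_{n=0}^{N-1}\frac{1+\xi_n\eps^{\frac12}\omega_{n,S_n}}{2},
\]
where $\xi_n:=S_{n+1}-S_n\in\{\pm 1\}$ is the sign of the $n$-th step of the given path. Expanding each factor as the sum of its two terms, and indexing the choices by the subset $A=\{i_1<\cdots<i_k\}\subset\{0,\dots,N-1\}$ of time indices at which the $\eps^{1/2}\omega$ contribution is selected (with the $1$ contribution chosen at the remaining times), gives
\[
\PPo(S_N=y)=\sum_{k=0}^{N}\frac{\eps^{k/2}}{2^{N}}\sum_{A,\,|A|=k}\sum_{\text{paths}}\Big(\prod_{\ell=1}^{k}\xi_{i_\ell}\,\omega_{i_\ell,S_{i_\ell}}\Big).
\]

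Next, for each fixed $A$ I would further classify paths by the positions $j_\ell:=S_{i_\ell}$, which turns $\omega_{i_\ell,S_{i_\ell}}$ into the deterministic label $\omega_{i_\ell,j_\ell}$ that can be pulled outside the path sum. Writing $z_\ell=(i_\ell,j_\ell)$, this yields
\[
\PPo(S_N=y)=\sum_{k=0}^{N}\eps^{k/2}\sum_{(z_1,\dots,z_k)}\omega_{z_1}\cdots\omega_{z_k}\cdot C_k(z_1,\dots,z_k;N,y),
\]
with
\[
C_k=\frac{1}{2^{N}}\sum_{\xi_1,\dots,\xi_k\in\{\pm 1\}}\xi_1\cdots\xi_k\,\#\big\{\text{paths } 0\to y \text{ hitting each }(i_\ell,j_\ell)\text{ with signed step }\xi_\ell\big\}.
\]
The path count factorizes across the segments $(0,0)\to(i_1,j_1)$, $(i_\ell+1,j_\ell+\xi_\ell)\to(i_{\ell+1},j_{\ell+1})$ for $\ell<k$, and $(i_k+1,j_k+\xi_k)\to(N,y)$. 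Using that the number of nearest-neighbor paths of length $n$ from $a$ to $b$ is $2^n\PP(S_n=b-a)$, each such factorization contributes $2^{N-k}$ times the corresponding product of SSRW transition probabilities.

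Finally, the total power of two collapses to $2^{N-k}/2^{N}=2^{-k}$, and since $\xi_\ell$ appears in exactly one factor of the product I would perform the $\xi_\ell$-sum inside each factor, observing
\[
\sum_{\xi=\pm 1}\xi\,\PP(S_m=c-\xi)=\PP(S_m=c-1)-\PP(S_m=c+1).
\]
Matching the first segment with the $\PP(S_{i_1}=j_1)$ factor and the middle/last segments with the telescoping differences recovers exactly the formula \eqref{eq:defpsi}. The $k=0$ term is just $\PP(S_N=y)$, completing \eqref{eq:Pold}. The only real subtlety is bookkeeping the off-by-one shifts $\pm\xi_\ell$ introduced when a marked site contributes a single deterministic step before the next random-walk segment; beyond that, the argument is a routine product expansion.
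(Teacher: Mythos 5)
Your proposal is correct and follows essentially the same route as the paper: expand the product of transition probabilities in powers of $\eps^{1/2}$, index the expansion by the set of marked times $\{i_1<\cdots<i_k\}$ and the walker's positions there, and then factorize the remaining path count across the inter-marking segments to produce the telescoping $\pm1$ differences. The only cosmetic difference is that you phrase the bookkeeping as a direct sum over paths (with explicit $2^{-N}$ weights and the identity $\#\{\text{paths of length }n\text{ from }a\text{ to }b\}=2^n\PP(S_n=b-a)$), whereas the paper writes it as an expectation under the SSRW measure and invokes independence of increments; the underlying computation is identical.
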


For $t>0$ and $x\in \R$ recall the conventions introduced before Theorem~\ref{t:mainTh} through which we associated $(t,x)$ with a pair $(t_{\eps},x_{\eps})\in \T_\eps$. We will study the RWRE transition probability when $N=t_\eps/\eps^2$ and $y=vt_\eps/\eps^2+x_\eps/\eps$. Towards this end, let us rewrite the chaos series for $\PPo(S_N=y)$ in terms of rescaled coordinates. For $z=(i,j)\in \Z_2^2$, denote $z_{\eps}= \cA_{\eps}(i,j)\in \T_{\eps}$ and for a sequence of $z_{\eps,1},\ldots,z_{\eps,k}\in\T_{\eps}$ define
\begin{equation}
\psi_{\eps,k}(z_{\eps,1},\ldots,z_{\eps,k})=\psi_k(\cA_{\eps}^{-1}z_{\eps,1},\ldots,\cA_\eps^{-1}z_{\eps,k})
\label{eq:defpsieps}
\end{equation}
with $\psi_k$ given by \eqref{eq:defpsi}. Also define a sequence of i.i.d. random variables indexed by $z_{\eps}\in \T_\eps$
by
$\tilde{\omega}_{z_\eps}=\omega_{\cA_\eps^{-1}z_\eps}$. With these rescaled coordinates, we have the following polynomial chaos series.

\begin{cor}
For fixed $t>0,x\in\R$, we have
\begin{equation}
\begin{aligned}
\PPo\left(S_{\frac{t_\eps}{\eps^2}}= v\frac{t_\eps}{\eps^2}+\frac{x_\eps}{\eps}\right)=&\PP\left(S_{\frac{t_\eps}{\eps^2}}= v\frac{t_\eps}{\eps^2}+\frac{x_\eps}{\eps}\right)\\
&+\sum_{k=1}^{t_\eps/\eps^2}\eps^{\frac{k}{2}} \sum_{(z_{\eps,1},\ldots,z_{\eps,k})\in \T_\eps^k}\psi_{\eps,k}(z_{\eps,1},\ldots,z_{\eps,k})\tilde{\omega}_{z_{\eps,1}}\ldots\tilde{\omega}_{z_{\eps,k}},
\end{aligned}
\label{eq:Pnew}
\end{equation}
with the summation $\sum_{(z_{\eps,1},\ldots,z_{\eps,k})\in \T_\eps^k}$ restricted to $0\leq i_{\eps,1}<\ldots<i_{\eps,k}\leq \frac{t_\eps}{\eps^2}-1$.
\end{cor}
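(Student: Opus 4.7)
The plan is to derive the corollary as an essentially immediate consequence of Lemma~\ref{l:pro}, applied to $N = t_\eps/\eps^2$ and $y = vt_\eps/\eps^2 + x_\eps/\eps$ and then rewritten via the bijective affine map $\cA_\eps \colon \Z_2^2 \to \T_\eps$. First I would verify that this $(N, y)$ lies in $\Z_2^2$: by construction $(t_\eps, x_\eps) \in \T_\eps$ is the $\cA_\eps$-image of a lattice point in $\Z_2^2$, and computing $\cA_\eps(N, y) = (N\eps^2, (y - vN)\eps) = (t_\eps, x_\eps)$ shows that this lattice point is exactly $(N, y)$. Thus Lemma~\ref{l:pro} applies and produces the stated polynomial chaos series written over tuples in $\Z_2^2$.

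Next, I would re-index each inner sum by the substitution $z_\ell \mapsto z_{\eps,\ell} := \cA_\eps(z_\ell)$. The defining relations $\psi_{\eps,k}(z_{\eps,1}, \ldots, z_{\eps,k}) = \psi_k(\cA_\eps^{-1} z_{\eps,1}, \ldots, \cA_\eps^{-1} z_{\eps,k})$ and $\tilde{\omega}_{z_\eps} = \omega_{\cA_\eps^{-1} z_\eps}$ are engineered precisely to give term-by-term equality of the summands, so only the index set requires any check. Since $\cA_\eps$ scales the first coordinate by the positive factor $\eps^2$, it is strictly monotone in time, and the ordering constraint $0 \le i_1 < \ldots < i_k \le N-1$ transfers verbatim to $0 \le i_{\eps,1} < \ldots < i_{\eps,k} \le t_\eps/\eps^2 - 1$. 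For each such fixed time configuration, the sum over $j_1, \ldots, j_k \in \Z$ consistent with $(i_\ell, j_\ell) \in \Z_2^2$ corresponds under $\cA_\eps$ to the full sum over spatial coordinates in $\T_\eps$, giving exactly the domain $\T_\eps^k$ subject to the stated time ordering.

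There is no substantive obstacle here: the corollary is a relabeling of the expansion already provided by Lemma~\ref{l:pro}. The only items deserving a line of care are the preservation of the time ordering under $\cA_\eps$ and the matching of the outer summation range $k = 1, \ldots, N$ with $k = 1, \ldots, t_\eps/\eps^2$, both of which are immediate from $N = t_\eps/\eps^2$ and the bijectivity of $\cA_\eps$ on $\Z_2^2$.
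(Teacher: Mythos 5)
Your proof is correct and matches the paper's (implicit) approach: the corollary is treated in the paper as an immediate relabeling of Lemma~\ref{l:pro} under the affine map $\cA_\eps$, using the defining relations for $\psi_{\eps,k}$ and $\tilde\omega$, and you have spelled out exactly the bookkeeping (membership of $(N,y)$ in $\Z_2^2$, preservation of time ordering, matching outer range) that makes this rigorous.
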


It remains now to prove that as $\eps$ goes to zero, the above series converges to the SHE chaos series (after some minor rescaling of coordinates) given in \eqref{eq:chaosSHE}. Owing to the general machinery given in \cite[Section 2.3]{caravenna2013polynomial}, the main technical challenge in achieving this convergence is to prove the $L^2$-convergence of $\psi_{\eps,k}(z_{\eps,1},\ldots,z_{\eps,k})$ to the corresponding SHE chaos series coefficients. To state this result, we introduce the $L^2$-space for this convergence. Let $\Delta_k(t)$ be the time simplex defined in \eqref{eq:Deltak}.
As per our conventions, the domain of the function $\psi_{\eps,k}$ extends to all of $\Delta_k(t)\times \R^{k}$ by replacing each $(t_{l},x_l)$ by the corresponding $(t_{\eps,l},x_{\eps,l})$. If for two distinct choices of $l$, the resulting $t_{\eps,l}$ coincide, then set $\psi_{\eps,k}$ equal to zero. The function $\psi_{\eps,k}$ may also be extended to unordered times $t_1,\ldots,t_k\in \R_{+}$ by setting it equal to the corresponding value for the ordered times.


Let $p_{\sigma^2}(t,x) = (2\pi \sigma^2 t)^{1/2} \exp\{-x^2/2\sigma^2t\}$ be the density of normal distribution $N(0,\sigma^2t)$.
\begin{lem}
\label{l:conco}
For fixed $t>0,x\in\R, k\geq 1$, let $t_0=0,x_0=0,t_{k+1}=t,x_{k+1}=x$. Then
\begin{equation}
\begin{aligned}
\frac{1}{\eps^{1+k}} e^{\frac{t_\eps}{\eps^2}\Ione(v)+\frac{x_\eps}{\eps}\Itwo(v)} \psi_{\eps,k}\big((t_1,x_1),\ldots,(t_k,x_k)\big)
\to 2(2v)^k\prod_{l=1}^{k+1} p_{1-v^2}(t_l-t_{l-1},x_l-x_{l-1})
\label{eq:conco}
\end{aligned}
\end{equation}
in $L^2(\Delta_k(t)\times\R^k)$. In addition,
\begin{equation}
\lim_{M\to\infty}\limsup_{\eps\to0}\sum_{k>M}\left\|\frac{1}{\eps^{1+k}} e^{\frac{t_\eps}{\eps^2}\Ione(v)}e^{\frac{x_\eps}{\eps}\Itwo(v)}\psi_{\eps,k}\right\|_{L^2(\Delta_k(t)\times \R^{k})}^2=0.
\label{eq:ubdl2}
\end{equation}
\end{lem}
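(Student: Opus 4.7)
The plan is to prove \eqref{eq:conco} by a factor-by-factor asymptotic analysis of the product formula \eqref{eq:defpsi} for $\psi_k$, using the sharp local large-deviation estimate for the SSRW from Appendix~\ref{app:A} (Lemma~\ref{l:ldprw}), and to deduce \eqref{eq:ubdl2} from a uniform Gaussian majorant together with a Dirichlet simplex integral. First, writing $m_\ell = i_\ell - i_{\ell-1}$ and $\Delta_\ell = j_\ell - j_{\ell-1}$ (with $i_0 = j_0 = 0$ and $i_{k+1}, j_{k+1}$ the terminal coordinates), the $k+1$ SSRW factors together use $N - k$ steps, so I would distribute the global tilt as
\[
e^{\frac{t_\eps}{\eps^2}\Ione(v) + \frac{x_\eps}{\eps}\Itwo(v)} = e^{k(\Ione(v) - v\Itwo(v))} \prod_{\ell=1}^{k+1} e^{\tilde m_\ell \Ione(v) + (\Delta_\ell - v\tilde m_\ell)\Itwo(v)},
\]
where $\tilde m_1 = m_1$ and $\tilde m_\ell = m_\ell - 1$ for $\ell \geq 2$. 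A direct computation from the explicit formulas yields the clean identity $\Ione(v) - v\Itwo(v) = \tfrac12\log(1-v^2)$ (with $\Itwo(v) = \mathrm{arctanh}(v)$), so the prefix contributes the factor $(1-v^2)^{k/2}$, which is destined to combine with the constants arising from the differences.

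Second, I apply the sharp local LDP of Lemma~\ref{l:ldprw} to each factor. For the leading factor,
\[
e^{m_1\Ione(v) + (\Delta_1 - vm_1)\Itwo(v)}\,\PP(S_{m_1} = \Delta_1) \longrightarrow 2\eps\, p_{1-v^2}(t_1, x_1),
\]
while for each of the $k$ differences ($\ell \geq 2$), pulling out $e^{\pm\Itwo(v)}$ from the $\pm 1$ spatial shifts and then applying the sharp LDP to each of the two probabilities gives
\[
(e^{\Itwo(v)} - e^{-\Itwo(v)})\cdot 2\eps\, p_{1-v^2}(t_\ell - t_{\ell-1}, x_\ell - x_{\ell-1}) = \frac{4v\eps}{\sqrt{1-v^2}}\, p_{1-v^2}(t_\ell - t_{\ell-1}, x_\ell - x_{\ell-1}),
\]
using $e^{\Itwo(v)} - e^{-\Itwo(v)} = 2v/\sqrt{1-v^2}$. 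Multiplying the $k+1$ contributions together with the $2^{-k}$ from \eqref{eq:defpsi} and the $(1-v^2)^{k/2}$ from the tilt prefix, and dividing by $\eps^{1+k}$, the constants collapse to exactly $2(2v)^k$, so the pointwise limit on $\{0 < t_1 < \cdots < t_k < t\}$ is the claimed one.

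To upgrade pointwise convergence to $L^2$ and to prove \eqref{eq:ubdl2}, I would establish a uniform-in-$\eps$ Gaussian majorant of the form
\[
\left|\eps^{-(1+k)} e^{\frac{t_\eps}{\eps^2}\Ione(v) + \frac{x_\eps}{\eps}\Itwo(v)} \psi_{\eps,k}\right| \leq C^{k+1}(2v)^k \prod_{\ell=1}^{k+1} p_{c}(t_\ell - t_{\ell-1}, x_\ell - x_{\ell-1})
\]
for absolute constants $C, c > 0$, which is what the uniform version of Lemma~\ref{l:ldprw} provides. Squaring and integrating in the spatial variables using $p_c(s,y)^2 = (4\pi cs)^{-1/2} p_c(s/2, y)$ together with Gaussian convolution reduces the space integral to a single $p_c(t/2, x)$ factor times $\prod_{\ell=1}^{k+1}(4\pi c(t_\ell - t_{\ell-1}))^{-1/2}$; the time integration then invokes the Dirichlet identity
\[
\int_{\Delta_k(t)} \prod_{\ell=1}^{k+1}(t_\ell - t_{\ell-1})^{-1/2}\,dt_1\cdots dt_k = \frac{\pi^{(k+1)/2}}{\Gamma((k+1)/2)}\, t^{(k-1)/2},
\]
yielding a bound of the form $\|\cdot\|_{L^2}^2 \leq C'^{\,k+1}/\Gamma((k+1)/2)$, a convergent series in $k$ whose $k > M$ tail vanishes as $M\to\infty$. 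For each fixed $k$, dominated convergence then promotes the pointwise limit to $L^2$. The main obstacle I anticipate is producing the uniform Gaussian majorant across all parameter regimes at once, since the local CLT, moderate-deviation, and sharp LDP asymptotics merge when some $t_\ell - t_{\ell-1}$ becomes comparable to $\eps^2$; a single uniform sharp-LDP statement handling all these cases (the content of Appendix~\ref{app:A}) is the main technical input.
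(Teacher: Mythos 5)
Your pointwise asymptotics are correct and, modulo a different bookkeeping of the exponential tilt, match the paper's: you distribute $e^{\frac{t_\eps}{\eps^2}\Ione(v)+\frac{x_\eps}{\eps}\Itwo(v)}$ across the $k+1$ walk segments using the identity $\Ione(v)-v\Itwo(v)=\tfrac12\log(1-v^2)$ to absorb the ``lost'' $k$ steps, whereas the paper simply applies the LDP Lemma~\ref{l:ldprw} to each factor directly with the appropriate $(m_1,m_2)\in\{(0,0),(-1,\pm1)\}$ shifts. Both routes land on $2(2v)^k\prod p_{1-v^2}$ and the arithmetic in your version checks out.

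The gap is in your second step, and it is not a small one. You assert a clean uniform Gaussian majorant
\[
\left|\eps^{-(1+k)} e^{\frac{t_\eps}{\eps^2}\Ione(v) + \frac{x_\eps}{\eps}\Itwo(v)} \psi_{\eps,k}\right| \leq C^{k+1}(2v)^k \prod_{\ell=1}^{k+1} p_{c}(t_\ell - t_{\ell-1}, x_\ell - x_{\ell-1}),
\]
and claim this ``is what the uniform version of Lemma~\ref{l:ldprw} provides.'' It does not. The uniform bound \eqref{eq:uesrw} in the appendix is a genuinely two-piece estimate of the form
\[
C\left(\tfrac{1}{\eps}\1\{s\leq 100\eps^2,\,|y|\leq C\eps\}+\1\{s>100\eps^2\}\,s^{-1/2}e^{-y^2/(Cs)}\right),
\]
and the first (near-diagonal) piece is \emph{not} dominated by any continuum Gaussian $p_c(s,y)$: fix $|y|$ of order $\eps$ and let $s\to 0^+$, then $p_c(s,y)$ vanishes super-exponentially while the indicator term sits at $\eps^{-1}$. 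This is not merely a technicality the appendix could be asked to repair — the transition probability really is of size $\eps^{-1}$ (after the tilt and $\eps^{-1}$ normalization) on cells where a time increment equals one lattice step but the continuum increment is much smaller than $\eps^2$. As a result, the paper's proof of \eqref{eq:ubdl2} spends the bulk of its effort on a combinatorial decomposition: it splits $\Delta_k(t)$ into $\Delta_{k,\eps}(t)$ (where at least one increment falls in the indicator regime) and its complement, uses dominated convergence plus the Dirichlet integral only on the complement, and then controls $\|\cdot\|^2_{L^2(\Delta_{k,\eps}\times\R^k)}$ by summing over the subset $A\subseteq\{1,\dots,k+1\}$ of factors in the indicator regime (Cases 1, 2.i–2.iv) and showing each $I_A$ carries a small power of $\eps$ together with the Dirichlet factor on $A^c$. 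That is exactly the analysis your sketch elides. Your Dirichlet-integral tail estimate and $L^2$ dominated-convergence argument are correct \emph{conditional} on the Gaussian majorant, so the plan would go through once the near-diagonal contributions are shown to be negligible — but showing that is the real content, and it is what you have deferred rather than addressed.

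A secondary omission: you never invoke the convention that $\psi_{\eps,k}$ is set to zero when two time-discretizations coincide, which is the definition that makes $\psi_{\eps,k}$ a well-defined element of $L^2(\Delta_k(t)\times\R^k)$ in the first place and is what guarantees the near-diagonal region is genuinely of small measure.
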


Now we can prove the main result.
\begin{proof}[Proof of Theorem~\ref{t:mainTh}] First observe that the left-hand side of \eqref{eq:LDPprob} is equal to $\frac{1}{\eps} e^{\frac{t_\eps}{\eps^2}\Ione(v)+\frac{x_\eps}{\eps}\Itwo(v)}$ times the polynomial chaos on the right-hand side of \eqref{eq:Pnew}.  We apply the general criteria given in \cite[Theorem 2.3]{caravenna2013polynomial} to prove convergence of a polynomial chaos to a Wiener chaos. There are three criteria which we must confirm to apply these results:
 %
\begin{enumerate}
\item[(i)] The random variables $\tilde\omega$ are i.i.d. with $\E\{\tilde\omega\}=0$ and $\E\{\tilde\omega^2\}<\infty$.

\item[(ii)] The volume of the cells in the tessellation $\C_\eps$ is $2\eps^3$. By Lemma~\ref{l:ldprw} and Lemma~\ref{l:conco}, we have,
 \[
 \frac{1}{\eps} e^{\frac{t_\eps}{\eps^2}\Ione(v)+\frac{x_\eps}{\eps}\Itwo(v)} \PP\left(S_{\frac{t_\eps}{\eps^2}}= v\frac{t_\eps}{\eps^2}+\frac{x_\eps}{\eps}\right)\to 2p_{1-v^2}(t,x)
 \]
 and, in $L^2(\Delta_k(t)\times\R^k)$,
 \[
 \begin{aligned}
 &\frac{1}{\eps} e^{\frac{t_\eps}{\eps^2}\Ione(v)+\frac{x_\eps}{\eps}\Itwo(v)}\eps^{\frac{k}{2}} \psi_{\eps,k}\big((t_1,x_1),\ldots,(t_k,x_k)\big)|\C_\eps|^{-\frac{k}{2}}\\
 &=\frac{1}{2^{\frac{k}{2}}}\frac{1}{\eps^{1+k}} e^{\frac{t_\eps}{\eps^2}\Ione(v)+\frac{x_\eps}{\eps}\Itwo(v)}\psi_{\eps,k}\big((t_1,x_1),\ldots,(t_k,x_k)\big)\\
 &\to 2(\sqrt{2}v)^k\prod_{l=1}^{k+1} p_{1-v^2}(t_l-t_{l-1},x_l-x_{l-1}).
\end{aligned}
 \]

\item[(iii)] By Lemma~\ref{l:conco}, the tail
  \[
  \lim_{M\to\infty}\limsup_{\eps\to0}\sum_{k>M}\sum_{(z_{\eps,1},\ldots,z_{\eps,k})\in \T_\eps^k}\left|\frac{1}{\eps} e^{\frac{t_\eps}{\eps^2}\Ione(v)+\frac{x_\eps}{\eps}\Itwo(v)}\eps^{\frac{k}{2}}\psi_{\eps,k}(z_{\eps,1},\ldots,z_{\eps,k})\right|^2=0.
  \]
\end{enumerate}

It then follows from \cite[Theorem 2.3]{caravenna2013polynomial} that
 \[
 \begin{aligned}
 &\frac{1}{\eps} e^{\frac{t_\eps}{\eps^2}\Ione(v)+\frac{x_\eps}{\eps}\Itwo(v)} \PPo\left(S_{\frac{t_\eps}{\eps^2}}= v\frac{t_\eps}{\eps^2}+\frac{x_\eps}{\eps}\right) \\
 \Rightarrow& 2p_{1-v^2}(t,x)+2\sum_{k=1}^\infty \int_{\Delta_k(t)\times \R^k}\prod_{l=1}^{k+1} p_{1-v^2}(t_l-t_{l-1},x_l-x_{l-1}) \prod_{l=1}^kv\sigma W(dt_l,dx_l)
 \end{aligned}
 \]
in distribution, with $\sigma=\sqrt{2\E\{\omega_{i,j}^2\}}$ and $W(dt,dx)$ a space-time white noise. To complete the proof, we only need to note that the limit is the series of Wiener chaos expansion of \eqref{eq:spde}.
 \end{proof}

\begin{proof}[Proof of Theorem~\ref{c:poly}] The results of Theorem~\ref{t:mainTh} hold in slightly more generality than stated. In particular, the distribution (denoted $\P$ and $\E$) of the $\omega$ may depend on $\eps$ (denoted $\P_{\eps}$ and $\E_{\eps}$) so long as $\E_{\eps}\{\omega_{i,j}\}=0$ and $2\E_{\eps}\{\omega_{i,j}^2\}\to \sigma^2$ as $\eps\to 0$. Then, an inspection of \cite[Theorem 2.3]{caravenna2013polynomial}  reveals that the same conclusion holds as in Theorem~\ref{t:mainTh}. With this in mind, define $\eps$-dependent $\omega_{i,j}=2\eps^{-1/2}\left(B_{i,j}-\frac12\right)$ with $B_{i,j}\sim \mathrm{Beta}(\eps^{-1},\eps^{-1})$, and note that $\E_{\eps}\{\omega_{i,j}\}=0$ and $\E_{\eps}\{\omega_{i,j}^2\}\to \frac12$ as $\eps\to0$. Thus, recalling that $Z(N,y)=\PPo(S_N=N-2y+2)$ and applying the above noted generalization of Theorem~\ref{t:mainTh}, we achieve the conclusion of Theorem~\ref{c:poly}.
\end{proof}
%

\subsection{Proofs of Lemma~\ref{l:pro} and Lemma~\ref{l:conco}}

The proof of Lemma~\ref{l:pro} is a straightforward calculation using the properties of the SSRW. To prove Lemma~\ref{l:conco}, 
 it is clear by \eqref{eq:defpsi} and \eqref{eq:defpsieps} that we need to analyze the sharp large deviation of the SSRW, i.e., the asymptotic behavior of
 \begin{equation}
\cP_\eps(t_\eps,x_\eps,m_1,m_2):= \PP\left(S_{\frac{t_\eps}{\eps^2}+m_1}=v\frac{t_{\eps}}{\eps^2}+\frac{x_\eps}{\eps}+m_2\right)
 \label{eq:defcp}
 \end{equation}
 for any $t>0,x\in\R$ and $(m_1,m_2)\in \{(0,0),(-1,-1),(-1,1)\}$. This is left to Appendix \ref{app:A}.

\begin{proof}[Proof of Lemma~\ref{l:pro}]
By the definition of RWRE it follows that
\begin{equation}
\begin{aligned}
\PPo(S_N=y)
=\quad&\EE\left\{\prod_{i=0}^{N-1}\left[1+(S_{i+1}-S_i)\eps^\frac12\omega_{i,S_i}\right]\1\{S_N=y\}\right\},
\end{aligned}
\label{eq:ldpr}
\end{equation}
where $\EE$ is with respect to the SSRW measure on $S$. 
Expanding the right-hand side of \eqref{eq:ldpr} we obtain (note below that the index of $S$ in $S_{i_l+1}$ is $i_{l}+1$)
\begin{equation*}
\begin{aligned}
\PPo(S_N=y)= \PP(S_N=y) +\sum_{k=1}^N\eps^\frac{k}{2} \sum_{0\leq i_1<\ldots<i_k\leq N-1}\EE\left\{\prod_{l=1}^k(S_{i_l+1}-S_{i_l})\omega_{i_l,S_{i_l}}\1\{S_N= y\}\right\}.
\end{aligned}
\end{equation*}
The expectation can be evaluated as
\begin{equation*}
\begin{aligned}
\EE\left\{\prod_{l=1}^k(S_{i_l+1}-S_{i_l})\omega_{i_l,S_{i_l}}\1\{S_N= y\}\right\}=\sum_{(z_1,\ldots,z_k)}\psi_k(z_1,\ldots,z_k)\omega_{z_1}\ldots\omega_{z_k},
\end{aligned}
\end{equation*}
where the summation $\sum_{(z_1,\ldots,z_k)}$ is over all possible $0\leq i_1<\ldots<i_k\leq N-1$ and $j_1,\ldots,j_k\in\Z$, and the coefficients
\begin{equation}
\begin{aligned}
&\psi_k(z_1,\ldots,z_k) = \sum_{\tau_1,\ldots,\tau_k=\pm1} \left(\prod_{l=1}^k \tau_l\right)\PP\left(\bigcap_{l=1}^k\{S_{i_l}=j_l,S_{i_l+1}=j_l+\tau_l\}\bigcap \{S_N=y\}\right).
\end{aligned}
\label{eq:defpsinew}
\end{equation}
Using the independence of increments of the SSRW in \eqref{eq:defpsinew}, we arrive at the expression in \eqref{eq:defpsi} and thus complete the proof.
\end{proof}

\begin{proof}[Proof of Lemma~\ref{l:conco}]
We first prove the convergence of \eqref{eq:conco} for fixed $0<t_1<\ldots<t_k<t$ and $x_1,\ldots,x_k\in\R$, then we prove the convergence is also in $L^2(\Delta_k(t)\times\R^k)$ for fixed $k$. In the end, we derive a bound on $\|\eps^{-k-1} e^{\frac{t_\eps}{\eps^2}\Ione(v)}e^{\frac{x_\eps}{\eps}\Itwo(v)}\psi_{\eps,k}\|_{L^2(\Delta_k(t)\times \R^{k})}$ for large $k$ to prove \eqref{eq:ubdl2}.

For $0<t_1<\ldots<t_k<t$ and $x_1,\ldots,x_k\in\R$, by \eqref{eq:defpsi}, \eqref{eq:defpsieps}, and \eqref{eq:defcp}, we have
\begin{equation}
\begin{aligned}
\frac{1}{\eps^{1+k}} e^{\frac{t_\eps}{\eps^2}\Ione(v)+\frac{x_\eps}{\eps}\Itwo(v)}\psi_{\eps,k}\big((t_1,x_1),\ldots,(t_k,x_k)\big)
= \frac{1}{2^k}  \ccP_\eps(t_1,x_1)\prod_{l=2}^{k+1} \ccP_\eps(t_l,x_l,t_{l-1},x_{l-1}),
\end{aligned}
\end{equation}
where
\[
\ccP_\eps(t_1,x_1)=\frac{1}{\eps} e^{\frac{t_{\eps,1}}{\eps^2}\Ione(v)+\frac{x_{\eps,1}}{\eps}\Itwo(v)}\cP_\eps(t_{\eps,1},x_{\eps,1},0,0),
\]
and for $2\leq l\leq k+1$,
\[
\begin{aligned}
&\ccP_\eps(t_l,x_l,t_{l-1},x_{l-1})=\frac{1}{\eps}e^{\frac{t_{\eps,l}-t_{\eps,l-1}}{\eps^2}\Ione(v)+\frac{x_{\eps,l}-x_{\eps,l-1}}{\eps}\Itwo(v)}\\
&\times\big[\cP_\eps(t_{\eps,l}-t_{\eps,l-1},x_{\eps,l}-x_{\eps,l-1},-1,-1)-\cP_\eps(t_{\eps,l}-t_{\eps,l-1},x_{\eps,l}-x_{\eps,l-1},-1,1)\big].
\end{aligned}
\]

By Lemma~\ref{l:ldprw}, we have $\ccP_\eps(t_1,x_1)\to 2p_{1-v^2}(t_1,x_1)$, and for $2\leq l\leq k+1$ we have $\ccP_\eps(t_l,x_l,t_{l-1},x_{l-1}) \to 4vp_{1-v^2}(t_l-t_{l-1},x_l-x_{l-1})$. This implies the pointwise convergence
\begin{equation}
\begin{aligned}
\frac{1}{\eps^{k+1}} e^{\frac{t_\eps}{\eps^2}\Ione(v)}e^{\frac{x_\eps}{\eps}\Itwo(v)}\psi_{\eps,k}\big((t_1,x_1),\ldots,(t_k,x_k)\big)
\to
 2(2v)^k \prod_{l=1}^{k+1}p_{1-v^2}(t_l-t_{l-1},x_l-x_{l-1}).
\end{aligned}
\label{eq:conpp}
\end{equation}

To prove the convergence in $L^2(\Delta_k(t)\times\R^k)$, we recall the uniform bound \eqref{eq:uesrw} which holds for large enough values of $C$:
\begin{equation}
\begin{aligned}
\ccP_\eps(t_1,x_1)&\leq C\left(\frac{1}{\eps}\1\{t_1\leq 100\eps^2,|x_1|\leq C\eps\}+\1\{t_1>100\eps^2\}\frac{1}{\sqrt{t_1}}e^{-\frac{x_1^2}{Ct_1}}\right),\\
\ccP_\eps(t_l,x_l,t_{l-1},x_{l-1})
&\leq C\bigg(\frac{1}{\eps}\1\{|t_l-t_{l-1}|\leq 100\eps^2,|x_l-x_{l-1}|\leq C\eps\} \\
&\quad\quad\quad +\1\{|t_l-t_{l-1}|>100\eps^2\}\frac{1}{\sqrt{t_l-t_{l-1}}}e^{-\frac{(x_l-x_{l-1})^2}{C(t_l-t_{l-1})}}\bigg).
\end{aligned}
\label{eq:bdldp1}
\end{equation}
Letting
\[
\Delta_{k,\eps}(t)=\big\{(t_1,\ldots,t_k)\in \Delta_k(t): \min_{l=1,\ldots,k+1}|t_l-t_{l-1}|\leq 100\eps^2 \big\},
\]
we will show that the integral in $\Delta_{k,\eps}(t)\times \R^k$ is negligible as $\eps\to0$. Before doing so,
observe that in $\Delta_k(t)\setminus\Delta_{k,\eps}(t)$, we have
\[
\begin{aligned}
\frac{1}{\eps^{1+k}} e^{\frac{t_\eps}{\eps^2}\Ione(v)+\frac{x_\eps}{\eps}\Itwo(v)}\psi_{\eps,k}\big((t_1,x_1),\ldots,(t_k,x_k)\big)
\leq C^k\prod_{l=1}^{k+1} \frac{1}{\sqrt{t_l-t_{l-1}}}e^{-\frac{(x_l-x_{l-1})^2}{C(t_l-t_{l-1})}}.
\end{aligned}
\]
The right-hand side above is in $L^2(\Delta_k(t)\times \R^k)$. Thus, by dominated convergence and the pointwise convergence in \eqref{eq:conpp} the following convergence holds in $L^2(\Delta_k(t)\times \R^k)$:
\begin{equation}
\begin{aligned}
\frac{1}{\eps^{1+k}} e^{\frac{t_\eps}{\eps^2}\Ione(v)+\frac{x_\eps}{\eps}\Itwo(v)}\psi_{\eps,k}\big((t_1,x_1),\ldots,(t_k,x_k)\big)\,\1\{\Delta_k(t)\setminus\Delta_{k,\eps}(t)\}
\to 2(2v)^k \prod_{l=1}^{k+1}p_{1-v^2}(t_l-t_{l-1},x_l-x_{l-1}).
\end{aligned}
\label{eq:dct}
\end{equation}
In addition (for our later proof of \eqref{eq:ubdl2}) we have
\begin{equation}
\begin{aligned}
&\left\|\frac{1}{\eps^{1+k}} e^{\frac{t_\eps}{\eps^2}\Ione(v)+\frac{x_\eps}{\eps}\Itwo(v)}\psi_{\eps,k}\,\1\{\Delta_k(t)\setminus\Delta_{k,\eps}(t)\}\right\|_{L^2(\Delta_k(t)\times\R^k)}^2\\
&\leq  C^k \int_{\Delta_k(t)\times \R^k} \prod_{l=1}^{k+1} \frac{1}{t_l-t_{l-1}}e^{-\frac{(x_l-x_{l-1})^2}{C(t_l-t_{l-1})}}d\mathbf{t}d\mathbf{x}\\
&\leq \frac{C^k}{\sqrt{t}}e^{-\frac{x^2}{Ct}}\int_{\Delta_k(t)} \prod_{l=1}^{k+1} \frac{1}{\sqrt{t_l-t_{l-1}}}d\mathbf{t}\\
&=\frac{C^k}{\sqrt{t}}e^{-\frac{x^2}{Ct}}t^{\frac{k-1}{2}}\int_{\Delta_k(1)} \frac{1}{\sqrt{t_1}}\frac{1}{\sqrt{t_2-t_1}}\ldots \frac{1}{\sqrt{1-t_k}}d\mathbf{t}\\
&\leq C^k e^{-\frac{x^2}{Ct}}t^{\frac{k}{2}-1} e^{-\frac{k\log k}{C}},
\end{aligned}
\label{eq:bd1}
\end{equation}
where the last step comes from the evaluation of the Dirichlet integral. It is clear that
\begin{equation}
\sum_{k=1}^\infty C^k e^{-\frac{x^2}{Ct}}t^{\frac{k}{2}-1} e^{-\frac{k\log k}{C}}<\infty.
\label{eq:bd11}
\end{equation}

Now we consider the integral in $\Delta_{k,\eps}(t)$ with the aim of showing that it is negligible as $\eps\to 0$. We change variables $t_l-t_{l-1}\mapsto \tau_l$, $x_l-x_{l-1}\mapsto y_l$ and use \eqref{eq:bdldp1} to derive that
\[
\begin{aligned}
&\left\|\frac{1}{\eps^{1+k}} e^{\frac{t_\eps}{\eps^2}\Ione(v)+\frac{x_\eps}{\eps}\Itwo(v)}\psi_{\eps,k}\right\|_{L^2(\Delta_{k,\eps}(t)\times\R^k)}^2\\
&\leq  C^k\int_{[0,\infty)^k}\int_{\R^k} \prod_{l=1}^{k+1} \left[\frac{1}{\eps^2} \1\{|\tau_l|\leq 100\eps^2, |y_l|\leq C\eps\}+\frac{1}{\tau_l}e^{-\frac{y_l^2}{C\tau_l}}\1\{|\tau_l|>100\eps^2\}\right]\\
&\qquad\qquad\qquad\qquad \1\Big\{\sum_{l=1}^{k+1}\tau_l=t,\min_l \tau_l\leq 100\eps^2\Big\}\1\Big\{\sum_{l=1}^{k+1} y_l=x\Big\} d\pmb{\tau} d\mathbf{y}\\
&=C^k \sum_{\substack{A\subset \{1,\ldots, k+1\}\\|A|\geq 1}}I_{A},
\end{aligned}
\]
where
\[
\begin{aligned}
I_{A}=\int_{[0,\infty)^k}\int_{\R^k}& \prod_{l\in A} \frac{1}{\eps^2} \1\{|\tau_l|\leq 100\eps^2, |y_l|\leq C\eps\} \prod_{l\in A^c}\frac{1}{\tau_l}e^{-\frac{y_l^2}{C\tau_l}}\1\{|\tau_l|>100\eps^2\}\\
&\1\{\sum_{l=1}^{k+1}\tau_l=t\}\1\{\sum_{l=1}^{k+1} y_l=x\} d\pmb{\tau} d\mathbf{y}.
\end{aligned}
\]
Here $A^c$ is the complement of $A$ in the set $\{1,\ldots, k+1\}$.

We seek to control the behavior as $\eps\to 0$ of all of these $I_{A}$ expressions. We consider the following two cases (as well as some subcases). Throughout, all bounds are assumed to hold for $\eps$ small enough, and the constants $C$ may change between lines.

\smallskip
\noindent{\bf Case 1:} $A^c=\varnothing$. We have
\[
\begin{aligned}
I_{A}=\int_{[0,\infty)^k}\int_{\R^k} \prod_{l=1}^{k} \frac{1}{\eps^2}\1\{|\tau_l|\leq 100\eps^2,|y_l|\leq C\eps\}
\frac{1}{\eps^2}\1\Big\{|t-\sum_{l=1}^k \tau_l|\leq 100\eps^2,|x-\sum_{l=1}^k y_l|\leq C\eps\Big\}d\pmb{\tau} d\mathbf{y}.
\end{aligned}
\]
For fixed $k$, the above integral equals to zero when $\eps$ is small enough to make $\sum_{l=1}^{k+1} 100\eps^2<t$. For arbitrary $k$, we have
\begin{equation}
I_{A}\leq C^k \eps^{k-2} \1\{k\geq 3\}.
\label{eq:bd2}
\end{equation}

\smallskip
\noindent{\bf Case 2:} $A^c\neq \varnothing$. Fix some $l^*\in A^c$ and define $\tilde{A}^c=A^c\setminus\{l^*\}$. We have
\[
\begin{aligned}
I_{A}=\int_{[0,\infty)^k}\int_{\R^k}& \prod_{l\in A} \frac{1}{\eps^2} \1\{|\tau_l|\leq 100\eps^2, |y_l|\leq C\eps\} \prod_{l\in \tilde{A}^c}\frac{1}{\tau_l}e^{-\frac{y_l^2}{C\tau_l}}\1\{|\tau_l|>100\eps^2\}\\
&\frac{1}{\tau_{l^*}}e^{-\frac{y_{l^*}^2}{C\tau_{l^*}}}\1\{|\tau_{l^*}|>100\eps^2\}\1\{\sum_{l=1}^{k+1}\tau_l=t\}\1\Big\{\sum_{l=1}^{k+1} y_l=x\Big\} d\pmb{\tau} d\mathbf{y}.
\end{aligned}
\]
By symmetry, we change variables and assume $l^*=k+1$, so $A\cup \tilde{A}^c=\{1,\ldots,k\}$, and
\[
\begin{aligned}
I_{A}=\int_{[0,\infty)^k} \int_{\R^k} &\prod_{l\in A} \frac{1}{\eps^2}\1\{|\tau_l|\leq 100\eps^2,|y_l|\leq C\eps\}\prod_{l\in \tilde{A}^c}\frac{1}{\tau_l} e^{-\frac{y_l^2}{C\tau_l}} \1\{|\tau_l|>100\eps^2\}\\
&\frac{1}{t-\sum_{l=1}^k \tau_l} e^{-\frac{(x-\sum_{l=1}^ky_l)^2}{C(t-\sum_{l=1}^k \tau_l)}}\1\Big\{\sum_{l=1}^k\tau_l<t\Big\}d\pmb{\tau} d\mathbf{y}.
\end{aligned}
\]

Integrating all $y_l$ for $l\in \tilde{A}^c$ yields
\begin{equation}
\begin{aligned}
I_{A}\leq C^{|\tilde{A}^c|}\int_{[0,\infty)^k}\int_{\R^{|A|}}&\prod_{l\in A} \frac{1}{\eps^2}\1\{|\tau_l|\leq 100\eps^2,|y_l|\leq C\eps\}\prod_{l\in\tilde{A}^c} \frac{1}{\sqrt{\tau_l}}\1\{|\tau_l|>100\eps^2\}\\
&\frac{1}{\sqrt{t-\sum_{l=1}^k \tau_l}}\frac{1}{\sqrt{t-\sum_{l\in A} \tau_l}} e^{-\frac{(x-\sum_{l\in A} y_l)^2}{C(t-\sum_{l\in A} \tau_l)}}\1\Big\{\sum_{l=1}^k\tau_l<t\Big\}d\pmb{\tau} d\mathbf{y}.
\end{aligned}
\label{eq:iab2}
\end{equation}
If $\tilde{A}^c\neq \varnothing$, we integrate $\tau_l$ for $l\in \tilde{A}^c$ and follow \eqref{eq:bd1} to obtain
\begin{equation}
\begin{aligned}
I_{A}\leq C^{|\tilde{A}^c|} e^{-\frac{|\tilde{A}^c|\log |\tilde{A}^c|}{C}}\int_{[0,\infty)^{|A|}}\int_{\R^{|A|}} &\prod_{l\in A} \frac{1}{\eps^2}\1\{|\tau_l|\leq 100\eps^2,|y_l|\leq C\eps\}
\\&\left(t-\sum_{l\in A} \tau_l\right)^{\frac{|\tilde{A}^c|}{2}-1}
\1\Big\{\sum_{l\in A}\tau_l<t\Big\}d\pmb{\tau} d\mathbf{y}.
\end{aligned}
\label{eq:iab1}
\end{equation}
If $\tilde{A}^c=\varnothing$, then $A=\{1,\ldots,k\}$ and we integrate any $y_i$ in \eqref{eq:iab2} to obtain
\begin{equation}
\begin{aligned}
I_{A}\leq \int_{[0,\infty)^{k}}\int_{\R^{k-1}} &\frac{1}{\eps^2}\1\{|\tau_i|\leq 100\eps^2\}\prod_{l\neq i,l=1}^k \frac{1}{\eps^2}\1\{|\tau_l|\leq 100\eps^2,|y_l|\leq C\eps\}\\
&\frac{1}{\sqrt{t-\sum_{l=1}^k \tau_l}}
\1\Big\{\sum_{l=1}^k\tau_l<t\Big\}d\pmb{\tau} d\mathbf{y}.
\end{aligned}
 \label{eq:iab}
 \end{equation}

We consider the following subcases of case 2.

\noindent{\bf Case 2.i:} $k\leq 3$. We can choose $\eps$ small in \eqref{eq:iab1} and \eqref{eq:iab} so that when $\tilde{A}^c\neq \varnothing$
\[
\begin{aligned}
I_{A}\leq C^k e^{-\frac{|\tilde{A}^c|\log |\tilde{A}^c|}{C}}t^{\frac{|\tilde{A}^c|}{2}-1}\eps^{|A|}\leq C^k t^{\frac{|\tilde{A}^c|}{2}-1} \eps^{|A|}.
\end{aligned}
\]
and when $\tilde{A}^c=\varnothing$,
\[
I_{A}\leq  C^k t^{-1} \eps^{|A|},
\]
Thus in both cases we have
\begin{equation}
I_{A}\leq C^kt^{\frac{|\tilde{A}^c|}{2}-1} \eps^{|A|}.
\label{eq:bd3}
\end{equation}

\noindent{\bf Case 2.ii:} $k>3$ and $|\tilde{A}^c|\geq 2$. We consider \eqref{eq:iab1} and the same discussion as in case 2.i leads to
\begin{equation}
I_{A}\leq C^k e^{-\frac{|\tilde{A}^c|\log |\tilde{A}^c|}{C}}t^{\frac{|\tilde{A}^c|}{2}-1}\eps^{|A|}.
\label{eq:bd4}
\end{equation}

\noindent{\bf Case 2.iii:} $k>3$ and $|\tilde{A}^c|=1$. We integrate any $\tau_{i}$ in \eqref{eq:iab1} to find
\[
\begin{aligned}
I_{A}\leq C^{|\tilde{A}^c|} e^{-\frac{|\tilde{A}^c|\log |\tilde{A}^c|}{C}}\int_{[0,\infty)^{|A|-1}}\int_{\R^{|A|}}& \frac{1}{\eps^2}\1\{|y_i|\leq C\eps\}\prod_{l\neq i,l\in A} \frac{1}{\eps^2}\1\{|\tau_l|\leq 100\eps^2,|y_l|\leq C\eps\}\\
&\Big(t-\sum_{l\neq i,l\in A} \tau_l\Big)^{1/2}\1\Big\{\sum_{l\neq i,l\in A}\tau_l<t\Big\}d\pmb{\tau} d\mathbf{y},
\end{aligned}
\]
thus,
\begin{equation}
I_{A}\leq C^ke^{-\frac{|\tilde{A}^c|\log |\tilde{A}^c|}{C}}\eps^{|A|-2}\sqrt{t}=C^k \sqrt{t}\eps^{k-3}.
\label{eq:bd5}
\end{equation}

\noindent{\bf Case 2.iv:} $k>3$ and $\tilde{A}^c=\varnothing$. We integrate any $\tau_i$ in \eqref{eq:iab} to find
\[
\begin{aligned}
I_{A}\leq \int_{[0,\infty)^{k-1}}\int_{\R^{k-1}} \frac{1}{\eps^2}\prod_{l\neq i,l=1}^k \frac{1}{\eps^2}\1\{|\tau_l|\leq 100\eps^2,|y_l|\leq C\eps\}
\Big(t-\sum_{l\neq i,l=1}^k \tau_l\Big)^{1/2}\1\Big\{\sum_{l\neq i,l=1}^k\tau_l<t\Big\}d\pmb{\tau} d\mathbf{y},
\end{aligned}
\]
thus
\begin{equation}
I_{A}\leq C^k\sqrt{t} \eps^{k-3}.
\label{eq:bd51}
\end{equation}

\medskip
To summarize, by \eqref{eq:bd2},\eqref{eq:bd3},\eqref{eq:bd4}, \eqref{eq:bd5} and \eqref{eq:bd51}, the following estimate holds for all $k$:
\begin{equation}
\begin{aligned}
&\left\|\frac{1}{\eps^{1+k}} e^{\frac{t_\eps}{\eps^2}\Ione(v)+\frac{x_\eps}{\eps}\Itwo(v)}\psi_{\eps,k}\right\|_{L^2(\Delta_{k,\eps}(t)\times\R^k)}^2\\
&\leq C^k \left( \eps^{k-2}\1\{k\geq 3\}+\sum_{a=1}^k {k+1\choose a} \mathcal{E}(k,a)\right),
\end{aligned}
\label{eq:allbd}
\end{equation}
where $a^c=k+1-a$, $\tilde{a}^c=a^c-1=k-a$ (so that $a=|A|$, $a^c=|A^c|$ and $\tilde{a}^c=|\tilde{A}^c|$), and
\[
\begin{aligned}
\mathcal{E}(k,a)
=\1\{k\leq 3\}t^{\frac{\tilde{a}^c}{2}-1} \eps^{a}+\1\{k>3,\tilde{a}^c\geq 2\}e^{-\frac{\tilde{a}^c\log \tilde{a}^c}{C}}t^{\frac{\tilde{a}^c}{2}-1}\eps^{a}+\1\{k> 3,\tilde{a}^c\leq 1\} \sqrt{t}\eps^{k-3}.
\end{aligned}
\]
It is clear that for each fixed $k$, the right-hand side of \eqref{eq:allbd} goes to zero as $\eps\to 0$ since $a\geq 1$. In light of the convergence in \eqref{eq:dct}, the proof of \eqref{eq:conco} is complete.

\vskip 5mm
In the end we provide a uniform estimate in $k$ when $k$ is large, and from now on we fix the large constant $C$ in \eqref{eq:allbd}. Considering the terms $\1\{k\leq 3\}t^{\frac{\tilde{a}^c}{2}-1} \eps^{a}$ and $\1\{k> 3,\tilde{a}^c\leq 1\} \sqrt{t}\eps^{k-3}$ from the above expression for $\mathcal{E}(k,a)$, we have
\begin{equation}
\begin{aligned}
&\sum_{a=1}^{k} {k+1\choose a}\left(\1\{k\leq 3\}t^{\frac{\tilde{a}^c}{2}-1} \eps^{a}+\1\{k> 3,\tilde{a}^c\leq 1\} \sqrt{t}\eps^{k-3}\right)\\
&\leq \1\{k\leq 3\}6\left(1+\frac{1}{\sqrt{t}}+\frac{1}{t}\right)\eps+\1\{k>3\}\left[(k+1)+\frac{(k+1)k}{2}\right] \sqrt{t}\eps^{k-3}.
\end{aligned}
\label{eq:bd6}
\end{equation}

For $\1\{k>3,\tilde{a}^c\geq 2\}e^{-\frac{\tilde{a}^c\log \tilde{a}^c}{C}}t^{\frac{\tilde{a}^c}{2}-1}\eps^{a}$, the remaining term in $\mathcal{E}(k,a)$, we write
\[
e^{-\frac{\tilde{a}^c\log \tilde{a}^c}{C}}t^{\frac{\tilde{a}^c}{2}-1}\eps^{a}=\frac{1}{t} \left(\frac{\sqrt{t}}{(\tilde{a}^c)^{\frac{1}{C}}}\right)^{\tilde{a}^c} \eps^{a},
\]
and we choose $M$ sufficiently large (only depending on $C,t$) so that when $\tilde{a}^c>M$,
\[
\frac{\sqrt{t}}{(\tilde{a}^c)^{\frac{1}{C}}}+\eps<\frac{\sqrt{t}}{M^{\frac{1}{C}}}+\eps<\frac{1}{C}.
\]
Thus,
\[
\begin{aligned}
e^{-\frac{\tilde{a}^c\log \tilde{a}^c}{C}}t^{\frac{\tilde{a}^c}{2}-1}\eps^{a}
\leq \1\{\tilde{a}^c>M\}\frac{1}{t} \left(\frac{\sqrt{t}}{M^{\frac{1}{C}}}\right)^{\tilde{a}^c} \eps^{a}
+\1\{\tilde{a}^c\leq M\}(1+t^{\frac{M}{2}-1})\eps^{k-M}.
\end{aligned}
\]
Thus, when $k>M$, we have
\begin{equation}
\begin{aligned}
&\sum_{a=1}^{k}  {k+1\choose a}\1\{\tilde{a}^c\geq 2\}e^{-\frac{\tilde{a}^c\log \tilde{a}^c}{C}}t^{\frac{\tilde{a}^c}{2}-1}\eps^{a}\\
\leq &\left[\sum_{a=1}^{k}  {k+1\choose a}\frac{1}{t} \left(\frac{\sqrt{t}}{M^{\frac{1}{C}}}\right)^{\tilde{a}^c} \eps^{a}+\sum_{a=1}^{k}  {k+1\choose a}(1+t^{\frac{M}{2}-1})\eps^{k-M}\right]\\
\leq & \left[\frac{M^{\frac{1}{C}}}{t^\frac32} (\frac{\sqrt{t}}{M^{\frac{1}{C}}}+\eps)^{k+1}+2^{k+1}(1+t^{\frac{M}{2}-1})\eps^{k-M}\right].
\end{aligned}
\label{eq:bd7}
\end{equation}

Combining \eqref{eq:allbd},\eqref{eq:bd6} and \eqref{eq:bd7}, we conclude that when $k>M$,  
\[
\begin{aligned}
&\left\|\frac{1}{\eps^{1+k}} e^{\frac{t_\eps}{\eps^2}\Ione(v)+\frac{x_\eps}{\eps}\Itwo(v)}\psi_{\eps,k}\right\|_{L^2(\Delta_{k,\eps}(t)\times\R^k)}^2\\
\leq & C^k \eps^{k-2}+C^k\left[(k+1)+\frac{(k+1)k}{2}\right] \sqrt{t}\eps^{k-3}
+C^k\left[\frac{M^{\frac{1}{C}}}{t^\frac32} (\frac{\sqrt{t}}{M^{\frac{1}{C}}}+\eps)^{k+1}+2^{k+1}(1+t^{\frac{M}{2}-1})\eps^{k-M}\right].
\end{aligned}
\]
The right-hand side is summable in $k>M$ when $\eps\ll 1$. Recalling the estimates in \eqref{eq:bd1} and \eqref{eq:bd11}, we complete the proof of \eqref{eq:ubdl2}.
\end{proof}

\section{Exactly solvable Beta polymer: moment convergence}\label{s:beta}

In this section, we study the limit of the exact moment formulas for the Beta polymer under the scaling of Theorem~\ref{c:poly}. Such moments formulas were first found in the work of \cite{barraquand2015random}, though the point-to-point moments we consider here are given explicitly in \cite{LeDoussal-Thierry}. As $\eps\to 0$, we prove on the formal level that these formulas converge to the corresponding SHE moment formulas given in \cite[Section 6.2]{BigMac}. We will not provide herein a rigorous proof of these moment formula asymptotics (similar asymptotics are present, for instance, in \cite{BigMac}) but rather just work at the level of critical point analysis of the contour integrals. This provides an independent confirmation of the correctness of Theorem~\ref{c:poly} (though even a rigorous proof of these moment formula convergences would not imply Theorem~\ref{c:poly} due to the lack of well-posedness of the moment problem for the SHE).

Recall that for the Beta polymer, $B_{i,j}$ has $\mathrm{Beta}(\alpha,\beta)$ distribution. Let $\mu=\alpha$ and $\nu=\alpha+\beta$, then it follows from \cite{barraquand2015random,LeDoussal-Thierry} that for $T\in \Z_{\geq 0}$ and $n_1\geq \ldots\geq n_k\in \Z$,
\begin{equation}
\begin{aligned}
&\E\{Z(T,n_1)\ldots Z(T,n_k)\}\\
&=\frac{(\nu)_k}{(2\pi i)^k}\int\ldots\int \prod_{1\leq A<B\leq k} \frac{z_A-z_B}{z_A-z_B-1}\prod_{j=1}^k \left(\frac{\nu+z_j}{z_j}\right)^{n_j}\left(\frac{\mu+z_j}{\nu+z_j}\right)^T \frac{dz_j}{(\nu+z_j)^2},
\end{aligned}
\label{eq:mmpa}
\end{equation}
where the contour for $z_k$ is a small circle around the origin, and the contour for $z_A$ contains the contour for $z_{B}+1$ for all $1\leq A<B\leq k$, as well as the origin, but all contours exclude $-\nu$. Here $(\nu)_k=\nu(\nu+1)\ldots (\nu+k-1)$.

There is a similar equation given in \cite[Section 6.2]{BigMac} for the moments of the SHE. In terms of the scalings of Theorem~\ref{c:poly}, it implies that for $t\geq 0$ and $x_1\geq  \cdots\geq x_k$,
\begin{equation}
\begin{aligned}
&\E\{ \V(t,x_1)\ldots \V(t,x_k)\}\\
&=\frac{(1-2\gamma)^{2k}}{(2\pi i)^k 2^k(1-\gamma)^{2k}} \int\ldots \int \prod_{1\leq A<B\leq k} \frac{z_A-z_B}{z_A-z_B-1}\prod_{j=1}^k \exp\left(\frac{(1-2\gamma)^4tz_j^2}{8\gamma(1-\gamma)}-\frac{(1-2\gamma)^2x_jz_j}{2\gamma(1-\gamma)}\right)dz_j,
\end{aligned}
\label{eq:mmshe}
\end{equation}
where the contour for $z_j$ is $r_j+i\R$ for arbitrary real $r_1,\ldots, r_k$ such that $r_j>r_{j+1}+1$ for all $j$.

The goal is to prove the convergence of \eqref{eq:mmpa} to \eqref{eq:mmshe} after a proper rescaling, and as we stressed at the beginning, we will not provide a detailed rigorous proof but only sketch the critical point analysis. We fix $t>0,x_j\in\R$ and $\gamma\in(0,\frac12)$, and let $\alpha=\beta=\eps^{-1}$,
and (recalling $(t_{\eps},x_{\eps})$ defined before Theorem \ref{t:mainTh})
$
T=\frac{t_\eps}{\eps^2}$, 
$
n_j=\gamma\frac{t_\eps}{\eps^2}+\frac{x_{\eps,j}}{\eps}.
$
We define
\[
f_\eps(z,n)=\log \frac{\alpha+z}{2\alpha+z}+\frac{n}{T}\log\frac{2\alpha+z}{z},\qquad \textrm{so}\qquad 
\left(\frac{2\alpha +z_j}{z_j}\right)^{n_j} \left(\frac{\alpha+z_j}{2\alpha+z_j}\right)^T= e^{Tf_\eps(z_j,n_j)}
\]
A simple calculation shows that the critical point of $f_{\eps}$ is given by
\begin{equation}
\frac{2n}{T-2n}\alpha\sim \frac{2\gamma}{(1-2\gamma)}\eps^{-1}=:z_{0,\eps}.
\end{equation}
We may deform our contours to lie close to the critical point $z_{0,\eps}$. Since $\eps\ll 1$, in the vicinity of $z_{0,\eps}$ the contours can be approximated by vertical straight lines of length on the order of $\eps^{-1}$. We will assume (without proof) that with small error, we can replace the integrand by its approximation around the critical point (such an argument would involve describing steep-descent contours and similar examples can be found, for instance, in \cite{BigMac}). Taylor expanding to second order around $z_{0,\eps}$ and setting $z_j=z_{0,\eps}+\tilde{z}_j$, we find
\[
\begin{aligned}
f_\eps(z_j,n_j)\approx  
-\I(1-2\gamma)+2\I'(1-2\gamma)\frac{\eps x_{\eps,j}}{t_\eps}+\frac{(1-2\gamma)^4\eps^2}{8\gamma(1-\gamma)}\tilde{z}_j^2-\frac{(1-2\gamma)^2\eps^2 x_{\eps,j}}{2\gamma(1-\gamma)t_\eps}\tilde{z}_j+O(\eps^3),
\end{aligned}
\]
hence (under the aforementioned critical point hypothesis) we find that
\[
\begin{aligned}
&\E\left\{\prod_{j=1}^k \frac{1}{\eps}e^{\frac{t_\eps}{\eps^2}\Ione(1-2\gamma)-\frac{2x_{\eps,j}}{\eps}\Itwo(1-2\gamma)}Z\left(\frac{t_\eps}{\eps^2},\gamma\frac{t_\eps}{\eps^2}+\frac{x_{\eps,j}}{\eps}\right)\right\}\\
&\to\frac{(1-2\gamma)^{2k}}{(2\pi i)^k2^k(1-\gamma)^{2k}} \int \cdots\int \prod_{1\leq A<B\leq k} \frac{\tilde{z}_A-\tilde{z}_B}{\tilde{z}_A-\tilde{z}_B-1}\prod_{j=1}^k \exp\left(\frac{(1-2\gamma)^4t\tilde{z}_j^2}{8\gamma(1-\gamma)}-\frac{(1-2\gamma)^2x_j\tilde{z}_j}{2\gamma(1-\gamma)}\right)d\tilde{z}_j,
\end{aligned}
\]
where the contours are as in \eqref{eq:mmshe}. The right-hand side equals $\E\{\V(t,x_1)\ldots\V(t,x_k)\}$ as desired.

\appendix

\section{Sharp large deviation for symmetric simple random walk}\label{app:A}

Recall that
\[
\cP_\eps(t_\eps,x_\eps,m_1,m_2)= \PP\left(S_{\frac{t_\eps}{\eps^2}+m_1}=v\frac{t_{\eps}}{\eps^2}+\frac{x_\eps}{\eps}+m_2\right),
\]
$p_{\sigma^2}(t,x)$ is the density of $N(0,\sigma^2t)$, and assume that $m_1,m_2\in \Z$, $|m_1|,|m_2|\leq 1$ and $m_1-m_2$ is even.
 \begin{lem}
 \label{l:ldprw}
 For fixed $t>0,x\in\R$, we have
\begin{equation}
\begin{aligned}
\frac{1}{\eps}e^{\frac{t_\eps}{\eps^2}\Ione(v)+\frac{x_\eps}{\eps}\Itwo(v)}\cP_\eps(t_\eps,x_\eps,m_1,m_2)\to  \frac{2p_{1-v^2}(t,x)}{(1+v)^{\frac{m_1+m_2}{2}}(1-v)^{\frac{m_1-m_2}{2}}}
\end{aligned}
\label{eq:pcldp}
\end{equation}
 as $\eps\to 0$, and there exists a constant $C>0$ such that for all $t>0,x\in \R$,
\begin{equation}
\begin{aligned}
\frac{1}{\eps}e^{\frac{t_\eps}{\eps^2}\Ione(v)+\frac{x_\eps}{\eps}\Itwo(v)}\cP_\eps(t_\eps,x_\eps,m_1,m_2)
\leq C\left(\frac{1}{\eps}\1\{t\leq 100\eps^2,|x|\leq C\eps\}+\1\{t>100\eps^2\}\frac{1}{\sqrt{t}}e^{-\frac{x^2}{Ct}}\right).
\end{aligned}
\label{eq:uesrw}
\end{equation}
\end{lem}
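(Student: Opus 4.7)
The plan is to use an exponential change of measure (Cramér tilt) to reduce the sharp large deviation statement to the local central limit theorem for a biased walk. Set $\lambda = \Itwo(v) = \tfrac12\log\frac{1+v}{1-v}$, and let $\mathsf{P}$ denote the law of the nearest-neighbour walk started at $0$ whose single step distribution is $\mathsf{P}(S_1 = \pm 1) = (1\pm v)/2$, so that the tilted walk has drift $v$ and step variance $1-v^2$. With $N = t_\eps/\eps^2 + m_1$ and $y = v t_\eps/\eps^2 + x_\eps/\eps + m_2$, the Radon--Nikodym density $d\PP/d\mathsf{P}$ is constant on $\{S_N = y\}$, giving the exact identity
\[
\PP(S_N = y) \;=\; (\cosh\lambda)^N e^{-\lambda y}\,\mathsf{P}(S_N = y).
\]

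Using the Legendre relation $\Ione(v) = v\lambda - \log\cosh\lambda$, one rewrites the deterministic prefactor as $e^{-N\Ione(v) - (y-vN)\Itwo(v)}$. Substituting the values of $N$ and $y$ above splits this factor as the main exponential $\exp(-t_\eps \Ione(v)/\eps^2 - x_\eps \Itwo(v)/\eps)$ times the $\eps$-independent constant $\exp\bigl(-m_1 \Ione(v) - (m_2 - v m_1)\Itwo(v)\bigr)$. Collecting the coefficients of $\log(1\pm v)$ in the latter exponent reproduces exactly $(1+v)^{-(m_1+m_2)/2}(1-v)^{-(m_1-m_2)/2}$, which is precisely the correction factor appearing on the right-hand side of \eqref{eq:pcldp}.

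The remaining factor $\mathsf{P}(S_N = y)$ is a local probability for a walk whose endpoint is within $O(\sqrt N)$ of its mean, so the lattice local central limit theorem (with the usual factor of $2$ from the parity of $S_N$) yields
\[
\mathsf{P}(S_N = y) \;=\; \frac{2}{\sqrt{2\pi N(1-v^2)}}\exp\left(-\frac{(y-vN)^2}{2N(1-v^2)}\right)\bigl(1 + o(1)\bigr),
\]
which after substituting $N \sim t/\eps^2$ and $y - vN = x_\eps/\eps + m_2 - v m_1$ equals $2\eps\, p_{1-v^2}(t,x)\bigl(1+o(1)\bigr)$. Multiplying with the explicit prefactors above yields \eqref{eq:pcldp}. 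For the uniform bound \eqref{eq:uesrw}, I would split according to $t_\eps/\eps^2$: when $t_\eps \leq 100\eps^2$ the probability is at most $1$, the target is reachable only on $\{|x_\eps/\eps|\leq C(v)\}$, and the exponential prefactor stays bounded on this set, producing the first term; when $t_\eps > 100\eps^2$, a Stirling-type uniform bound on the binomial gives $\mathsf{P}(S_N = y) \leq C' N^{-1/2}\exp(-(y-vN)^2/(C'N))$ with $C' = C'(v)$, valid over the entire lattice, which combined with the identities above produces the stretched-Gaussian term.

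The main obstacle is not conceptual, since the tilted LCLT and the Stirling bound for the binomial are classical, but rather one of uniformity: the $o(1)$ in the LCLT must be controlled uniformly over the range of $(t,x)$ needed in Lemma~\ref{l:conco}, and the constant $C'$ in the subgaussian upper bound must be chosen independently of $(t,x,\eps)$, including across the crossover region $t\sim\eps^2$ and for large $|x|/\sqrt{t}$ where the Gaussian approximation degrades. This is handled by invoking an explicit Stirling remainder (or a Hoeffding/Chernoff tail estimate for the binomial) rather than the bare LCLT asymptotic, but is otherwise routine.
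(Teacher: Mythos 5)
Your proposal is correct, and it organizes the computation differently from the paper. The paper works directly with Stirling's formula on the binomial $\PP(S_n=m)=\binom{n}{(n+m)/2}2^{-n}$: it writes the exponential prefactor as $\exp\{\tfrac12 n\log(1-v^2)+\tfrac12 m\log\tfrac{1+v}{1-v}-m_1\Ione(v)+(vm_1-m_2)\Itwo(v)\}$, observes that Stirling turns $\PP(S_n=m)$ into $\sqrt{2n/(\pi(n^2-m^2))}\,e^{-\tfrac n2 F(k)+\tfrac12 n\log(1-v^2)+\tfrac12 m\log\tfrac{1+v}{1-v}}$ with $F(k)$ the relative-entropy rate function, and then runs a case analysis on $k=m/n$ to extract both the limit and the uniform Gaussian tail bound. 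Your tilted-measure route instead isolates the entire deterministic exponential as the Radon--Nikodym density $(\cosh\lambda)^N e^{-\lambda y}$ with $\lambda=\Itwo(v)$, uses the Legendre identity $\Ione(v)=v\lambda-\log\cosh\lambda$ to see that this cancels the normalization exactly up to the constant $(1+v)^{-(m_1+m_2)/2}(1-v)^{-(m_1-m_2)/2}$, and leaves behind a genuine local CLT problem for the drifted walk with mean $v$ and variance $1-v^2$. The arithmetic underneath is the same Stirling estimate, but the tilt is more modular: it cleanly separates the exponential large-deviation cost (handled exactly, with no asymptotics) from the polynomial and Gaussian corrections (handled by the LCLT). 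What the paper's presentation buys in exchange is that it never needs to invoke the LCLT as a black box and can treat the edge cases $|m|=n$, $|k-v|\ge\tau$, and $|k-v|<\tau$ in one unified case split with explicit constants. You correctly flag that the remaining work for the uniform bound \eqref{eq:uesrw} is the delicate part (sub-Gaussian bound with the $N^{-1/2}$ prefactor valid across the whole lattice, including near $|y|=N$ where the binomial degenerates to $2^{-N}$ and near $N\sim 1$), and that this is handled by a quantitative Stirling remainder plus a Chernoff-type bound in the far tail — which is indeed the content of the paper's Cases~1, 2.i--2.iii. So the proposal is sound; the only thing I'd press you to do if writing it up fully is to spell out the uniform sub-Gaussian bound for the tilted walk, since the bare LCLT asymptotic $(1+o(1))$ does not by itself deliver the constant $C$ uniform in $(t,x,\eps)$ that \eqref{eq:uesrw} requires.
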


\begin{proof}
To simplify the notation, let
\begin{equation}
n=\frac{t_\eps}{\eps^2}+m_1, \ \
m=v\frac{t_\eps}{\eps^2}+\frac{x_\eps}{\eps}+m_2,
\label{eq:mnij}
\end{equation}
so that $\cP_\eps(t_\eps,x_\eps,m_1,m_2)=\PP(S_n=m)$. We will utilize the notation $\les$ when the left-hand side is bounded by a constant (independent of $\eps$ and $t$ and $x$) times the right-hand side for all $\eps$ sufficiently small. We first prove the estimate \eqref{eq:uesrw} that is uniform in $t>0,x\in\R$, then prove the convergence in \eqref{eq:pcldp} for fixed $t>0,x\in\R$.

\smallskip
\noindent{\bf Case 1:} $t\leq 100\eps^2$. It is clear that $t_\eps\leq 100\eps^2$, and by the fact that $|m|\leq n$ (or else $\PP(S_n=m)=0$), there exists $C>0$ so that $|x_\eps|\leq C\eps$. Thus, we have
\[
\frac{1}{\eps}e^{\frac{t_\eps}{\eps^2}\Ione(v)+\frac{x_\eps}{\eps}\Itwo(v)}\cP_\eps(t_\eps,x_\eps,m_1,m_2)\les\frac{1}{\eps} \1\{|x_\eps|\leq C\eps\}.
\]
This proves the first bound on the right-hand side of \eqref{eq:uesrw}.

\noindent{\bf Case 2:} $t>100\eps^2$. Recall that $|m_1|,|m_2|\leq 1$, so $n\geq 90$ -- we will implicitly use the largeness of $n$ in some of the bounds below. By \eqref{eq:mnij},
 \[
 \begin{aligned}
 \frac{t_\eps}{\eps^2}\Ione(v)+\frac{x_\eps}{\eps}\Itwo(v)=\frac12 n\log(1-v^2)+\frac12m\log\frac{1+v}{1-v}
 -m_1\Ione(v)+(vm_1-m_2)\Itwo(v),
\end{aligned}
  \]
  which implies
  \[
  e^{\frac{t_\eps}{\eps^2}\Ione(v)+\frac{x_\eps}{\eps}\Itwo(v)}\les e^{\frac12 n\log(1-v^2)+\frac12m\log\frac{1+v}{1-v}}.
  \]
Using Stirling's approximation,
  \[
  \sqrt{2\pi} N^{N+\frac12}e^{-N}\leq N!\leq eN^{N+\frac12}e^{-N},
  \]
we derive the bound
  \[
  \begin{aligned}
  \PP(S_n=m)\les \sqrt{\frac{n}{n^2-m^2}} e^{-\frac{n+m}{2}\log(1+\frac{m}{n})-\frac{n-m}{2}\log(1-\frac{m}{n})}
  \end{aligned}
  \]
when $|m|<n$.

Now we consider three different subcases of case 2. Fix $0<\tau\ll 1$, and let $k=m/n$.

\noindent{\bf Case 2.i:} $|k|=1$. We have $P(S_n=m)=2^{-n}$, so
\[
\begin{aligned}
\frac{1}{\eps}e^{\frac{t_\eps}{\eps^2}\Ione(v)+\frac{x_\eps}{\eps}\Itwo(v)}\cP_\eps(t_\eps,x_\eps,m_1,m_2)
\les \frac{1}{\eps}e^{\frac12 n\log(1-v^2)+\frac12m\log\frac{1+v}{1-v}-n\log 2}.
\end{aligned}
\]
In both cases of $m=n$ and $m=-n$, using the fact that $v\in(0,1)$, we have
\[
\frac{1}{\eps}e^{\frac12 n\log(1-v^2)+\frac12m\log\frac{1+v}{1-v}-n\log 2}\leq \frac{1}{\eps} e^{-\delta n}
\]
for some $\delta>0$ depending on $v$. Furthermore,
\[
\frac{1}{\eps}e^{-\delta n}=\frac{1}{\sqrt{\eps^2 n}}\sqrt{n}e^{-\frac{\delta n}{2}}e^{-\frac{\delta n}{2}} \les \frac{1}{\sqrt{t}}e^{-\frac{\delta n}{2}},
\]
so it remains to show that $x^2/t\les n$. By the fact that $|m|\leq n$, we have
\[
-t_\eps\les\eps x_\eps\les t_\eps,
\]
which implies $-t\les \eps x\les t$. This implies that in this case, the second bound on the right-hand side of \eqref{eq:uesrw} holds.

\noindent{\bf Case 2.ii:} $|k|<1$ and $|k-v|\geq \tau$. We have
\begin{equation}
\begin{aligned}
\frac{1}{\eps}e^{\frac{t_\eps}{\eps^2}\Ione(v)+\frac{x_\eps}{\eps}\Itwo(v)}\cP_\eps(t_\eps,x_\eps,m_1,m_2) \les \frac{1}{\eps} \sqrt{\frac{n}{n^2-m^2}}e^{-\frac{n}{2}F(k)},
\end{aligned}
\label{eq:pfuni}
\end{equation}
where
\begin{equation}
F(k)=(1+k)\log \frac{1+k}{1+v}+(1-k)\log \frac{1-k}{1-v}, \ \ k \in (-1,1).
\label{eq:defF}
\end{equation}
It is straightforward to check that $F(k)$ attains its minimum at $v$ and $F''(k)$ is bounded from below by some positive constant. Since $|k-v|\geq \tau$, we have $F(k)\geq \delta\tau^2$ with $\delta=\frac12\min_{k\in(-1,1)} F''(k)$. In addition, since $|m|<n$, we have $-n+2\leq m\leq n-2$, so
\[
\frac{1}{\eps} \sqrt{\frac{n}{n^2-m^2}} \leq \frac{1}{\sqrt{\eps^2 n}}\frac{1}{\sqrt{1-\left(\frac{m}{n}\right)^2}} \les \frac{1}{\sqrt{t}}\sqrt{n}.
\]
Therefore, we have
\[
\begin{aligned}
\frac{1}{\eps}e^{\frac{t_\eps}{\eps^2}\Ione(v)+\frac{x_\eps}{\eps}\Itwo(v)}\cP_\eps(t_\eps,x_\eps,m_1,m_2)
  \les \frac{1}{\sqrt{t}} \sqrt{n}e^{-\frac{ n}{2}\delta\tau^2}\les \frac{1}{\sqrt{t}} e^{-\frac{n}{4}\delta\tau^2},
\end{aligned}
\]
and the same discussion as in case 2.i shows $e^{-\frac{n}{4}\delta\tau^2}\les e^{-\frac{x^2}{Ct}}$ for some $C>0$, matching the second bound on the right-hand side of \eqref{eq:uesrw}.

\noindent{\bf Case 2.iii:} $|k-v|<\tau$. We have
\[
\frac{1}{\eps} \sqrt{\frac{n}{n^2-m^2}}\les \frac{1}{\sqrt{t}}.
\]
For the exponent, it is clear that $F(k)\geq \delta (k-v)^2$ with the same $\delta>0$ from case 2.ii, so
\[
e^{-\frac{n}{2}F(k)}\leq e^{-\frac{n}{2}\delta (k-v)^2}.
\]
Since $k=m/n$ with $m,n$ given in \eqref{eq:mnij}, we have
\begin{equation}
n(k-v)^2=\frac{(x_\eps+(m_2-vm_1)\eps)^2}{t_\eps+m_1\eps^2},
\label{eq:kv}
\end{equation}
so
\[
 e^{-\frac{n}{2}\delta (k-v)^2}\les  e^{-\frac{(|x|-\tilde{C}\eps)^2}{Ct}}\leq e^{-\frac{x^2}{Ct}}e^{\frac{2\tilde{C}\eps |x|}{Ct}}
 \]
 for some $C,\tilde{C}>0$. For $M>0$, if $|x|\leq M\eps$, we have the desired estimate; if $|x|>M\eps$, we have
 \[
 e^{-\frac{x^2}{Ct}}e^{\frac{2\tilde{C}\eps |x|}{Ct}}<  e^{-\frac{x^2}{Ct}}e^{\frac{2\tilde{C}x^2}{MCt}},
 \]
 so we only need to choose $M$ sufficiently large to complete the proof of \eqref{eq:uesrw}.

 To prove \eqref{eq:pcldp}, we note that for fixed $t>0,x\in\R$ and sufficiently small $\eps$, $|k-v|\ll 1$ (we are in the region of case 2.iii). We use Stirling's approximation and the fact that $n,\frac{n+m}{2},\frac{n-m}{2}\to \infty$ to obtain that
\[
\frac{\PP(S_n=m)}{\sqrt{\frac{2n}{\pi(n^2-m^2)}}e^{-\frac{n}{2}[(1+k)\log(1+k)+(1-k)\log(1-k)]}}\to 1
\]
as $\eps\to 0$. Thus we only need to analyze
\[
\begin{aligned}
&\frac{1}{\eps}e^{\frac{t_\eps}{\eps^2}\Ione(v)+\frac{x_\eps}{\eps}\Itwo(v)}\sqrt{\frac{2n}{\pi(n^2-m^2)}}e^{-\frac{n}{2}[(1+k)\log(1+k)+(1-k)\log(1-k)]}\\
&=\sqrt{\frac{2n}{\pi(n^2-m^2)\eps^2}} \frac{e^{-\frac{n}{2}F(k)}}{(1+v)^{\frac{m_1+m_2}{2}}(1-v)^{\frac{m_1-m_2}{2}}},
\end{aligned}
\]
with $F(k)$ defined in \eqref{eq:defF}. First, we have
\[
\sqrt{\frac{2n}{\pi(n^2-m^2)\eps^2}}\to \sqrt{\frac{2}{\pi(1-v^2)t}}.
\]
Secondly, by \eqref{eq:kv}, we have $|k-v|\les\eps$ for fixed $t>0,x\in\R$ and $n(k-v)^2\to \frac{x^2}{t}$
as $\eps\to0$. We expand $F(k)=\frac12F''(v)(k-v)^2+O(|k-v|^3)$ and conclude that
\[
\frac{n}{2}F(k)\to F''(v)\frac{x^2}{4t}=\frac{x^2}{2(1-v^2)t}
\]
as $\eps\to0$. This completes the proof of \eqref{eq:pcldp}.
\end{proof}


\begin{thebibliography}{1}

\bibitem{alberts2014intermediate}
T.~Alberts, K.~Khanin, J.~Quastel.
\newblock The intermediate disorder regime for directed polymers in dimension $1+1$.
\newblock {\em Ann. Probab.}, 42:1212--1256, 2014.

\bibitem{ACQ}
G.~Amir, I.~Corwin, J.~Quastel.
\newblock Probability distribution of the free energy of the continuum directed random polymer in $1+1$ dimensions.
\newblock {\em Commun. Pure Appl. Math.}, {\bf 64}:466--537, 2011.

\bibitem{balazs2006random}
M.~Bal{\'a}zs, F.~Rassoul-Agha, T.~Sepp{\"a}l{\"a}inen.
\newblock The random average process and random walk in a space-time random   environment in one dimension.
\newblock {\em Commun. Math. Phys.}, 266:499--545, 2006.

\bibitem{barraquand2015random}
G.~Barraquand and I.~Corwin.
\newblock Random-walk in beta-distributed random environment.
\newblock  {\em Probab. Theo. Rel. Fields}, onlinefirst, 2016.


\bibitem{BigMac}
A. Borodin, I.~Corwin.
\newblock Macdonald processes.
\newblock {\em Probab. Theo. Rel. Fields}, {\bf 158}:225--400, 2014.

\bibitem{caravenna2013polynomial}
F.~Caravenna, R.~Sun, and N.~Zygouras.
\newblock Polynomial chaos and scaling limits of disordered systems.
\newblock {\em J. Eur. Math. Soc.}, to appear.

\bibitem{ICReview}
I.~Corwin.
\newblock The {K}ardar-{P}arisi-{Z}hang equation and universality class.
\newblock {\em Rand. Mat.: Theo. Appl.}, {\bf 1}:1130001, 2012.

\bibitem{corwin2014macdonald}
I.~Corwin.
\newblock Macdonald processes, quantum integrable systems and the Kardar-Parisi-Zhang universality class.
\newblock {\em Proceedings of the International Congress of Mathematicians 2014}.

\bibitem{LeDoussal-Thierry}
T.Thiery, P. Le Doussal.
\newblock Exact solution for a random walk in a time-dependent 1D random environment: the point-to-point Beta polymer
\newblock {\em arXiv preprint arXiv:1605.07538}, 2016.

\bibitem{RASY13}
F.~Rassoul-Agha, T.~Sepp\"{a}l\"{a}inen, A.~Yilmaz.
\newblock Quenched free energy and large deviations for random walks in random potentials.
\newblock {\em Commun. Pure Appl. Math.}, {\bf 66}:202--244, 2013.
\end{thebibliography}

\end{document}